\newcommand{\CC}{\mathbb{C}}
\newcommand{\PP}{\mathbb{P}}
\newcommand{\XX}{\mathbb{X}}
\newcommand{\caA}{\mathcal{A}}
\newcommand{\polyA}{\mathcal{P}}
\newcommand{\tic}{\mathcal{T}}
\newcommand{\Sing}{\mathrm{Sing}}
\newcommand{\caB}{\mathcal{B}}
\theoremstyle{theorem}
\newtheorem{theorem}{\sc Theorem}[section]  
\newtheorem{proposition}[theorem]{\sc Proposition}   
\newtheorem{lemma}[theorem]{\sc Lemma}                
\newtheorem{definition}[theorem]{\sc Definition}
\newtheorem{example}[theorem]{\sc Example}
\newtheorem{problem}{\sc Problem}
\theoremstyle{remark}
\newtheorem{remark}[theorem]{Remark}
\title{Unexpected curves arising from special line arrangements}
\keywords{fat points, line arrangements, linear systems, splitting types} 
\subjclass[2010]{14N20 (primary), 13D02, 14C20, 14N05, 05E40, 14F05 (secondary)}
\author[M. Di Marca]{Michela Di Marca}
\address[M. Di Marca]{Dipartimento di Matematica, Universit\`a degli Studi di Genova, Genoa, Italy}
\email{dimarca@dima.unige.it}
\author[G. Malara]{Grzegorz Malara}
\address[G. Malara]{Department of Mathematics, Pedagogical University of Cracow, Krak\'ow, Poland}
\email{grzegorzmalara@gmail.com}
\author[A. Oneto]{Alessandro Oneto}
\address[A. Oneto]{Department of Mathematics, Universitat Polit\`ecnica de Catalunya, Barcelona, Spain}
\email{alessandro.oneto@upc.edu}
\begin{document}
	\maketitle

	\begin{abstract}
	In a recent paper \cite{CHMN17}, Cook II, Harbourne, Migliore and Nagel related {the splitting type} of a line arrangement in the projective plane to the number of conditions imposed by a general fat point of multiplicity $j$ to the linear system of curves of degree $j+1$ passing through the configuration of points dual to the given arrangement. If the number of conditions is less than the expected, {we say that the configuration of points admits} unexpected curves. In this paper, we characterize supersolvable line arrangements {whose dual configuration} admits unexpected curves
	{and we provide} other infinite families of line arrangements {with this property.}
	\end{abstract}
	
	\section{Introduction}
	Polynomial interpolation problems are among the most studied topics in algebraic geometry. A classical example deals with computing dimensions of linear systems of curves of given degree passing through a given set of points in the projective plane. In other words,  {on the space of coefficients of ternary homogeneous polynomials of degree $j$, we consider the system of linear equations given by imposing the vanishing at a set of $d$ points and we want to study the dimension of its solution. If the points are in {\it general position}, we may assume that this system of linear equations has maximal rank and the dimension of the solution is as small as possible, i.e., it is equal to ${j+2 \choose 2} - d$, unless this difference is negative, in which case it is zero \cite{GO81}.}
	
	If we consider points with some {{\it multiplicity}, usually called {\it fat points}}, where we require that the partial derivatives of the polynomial up to some order vanish at the points, the problem becomes much more complicated and only poorly understood. {In other words, we consider a polynomial interpolation problem where we look at plane curves having singularities of certain order at a set of points.} A complete answer is not known, even for points in {\it general position}. 

	Here is an example where the solution is not as expected. {Consider the space of plane quartics, which has dimension ${4+2 \choose 2} = 15$, and consider a scheme of five double points in general position, i.e., we consider the linear system of plane quartics having five singularities at general points.  Imposing a singularity at a point provides three linear equations, i.e., the vanishing of the three partial derivatives. Therefore, we have a system of $15$ linear equations on the space of plane quartics and we expect to have no quartics with five general singularities. However, through five general points there exists always a conic and, therefore, the double conic is an unexpected quartic singular at every point and, in particular, at the set of five general points.}
	
	The case up to nine general points goes back to Castelnuovo and it can also be found in the work of Nagata \cite{Nag60}. In the 1980s, Harbourne \cite{Har86}, Gimigliano \cite{Gim87} and Hirschowitz \cite{Hir89} independently gave conjectures on the dimension of a linear system of plane curves of given degree and with multiple general base points. {In \cite{CM01},} these conjectures have been proved to be all equivalent to an older conjecture by B. Segre \cite{Seg61} and, for this reason, we refer to them as the {\it SHGH Conjecture}. 
	
	In a recent paper \cite{CHMN17}, Cook II, Harbourne, Migliore and Nagel slightly changed the question. Instead of counting the number of linear conditions given by a set of general multiple points to the complete linear system of plane curves of given degree, as in the classical problem, they look at the conditions imposed by a general fat point to {the linear system of} plane curves of given degree and passing through some particular configuration of reduced points. 
	
	This new question was motivated by previous works. Faenzi and Vall\`es noticed the relation between the {\it splitting type} of a line arrangement and curves passing through the set of points dual to the line arrangement and a fat point of multiplicity one less than the degree of the curve \cite{FV14}. Afterwards, Di Gennaro, Ilardi and Vall\`es gave an example of configuration of points admitting an unexpected curve \cite[Proposition 7.3]{DIV14}. {We recall it in Example \ref{example:Brian}. Actually, in \cite{DIV14}, the authors were studying {\it Lefschetz properties} of {\it power ideals}, i.e., ideals generated by powers of linear forms. In \cite{CHMN17}, the authors formalize the relation between Lefschetz properties of power ideals and the existence of unexpected curves for the configuration of points dual to the linear forms that define the power ideal.}
	
	In particular, in \cite{CHMN17}, the authors gave a characterization of the existence of unexpected curves for a given set of point in terms of the {\it splitting type} of the dual line arrangement. It is worth mentioning, that they also relate this problem to the famous {\it Terao's Conjecture} which claims that {\it freeness of a hyperplane arrangement depends only on the incidence lattice of the arrangement}. In particular, they show that, if the splitting types depend only on the combinatorics of the arrangement, or equivalently if the existence of unexpected curves depends only on the combinatorics of the configuration of the points, then Terao's Conjecture holds \cite[Corollary 7.11]{CHMN17}.
	
	\medskip
	In this paper, we characterize {\it supersolvable} line arrangements whose dual configuration of points admits unexpected curves. We also present several infinite families of line arrangements having this unexpected behavior by computing their splitting types. These families generalize examples from \cite{CHMN17, DIV14}.
	
	\subsection*{Formulation of the problem.}
	Let $S = \CC[x_0,x_1,x_2] = \bigoplus_{i \geq 0} S_i$ be the standard graded ring of polynomials with complex coefficients, i.e., $S_i$ is the $\CC$-vector space of homogeneous polynomials of degree $i$. {Any homogeneous ideal $I$ inherits the grading, i.e., $I = \bigoplus_{i\geq 0} I_i$, where $I_i = I \cap S_i$.
	
		The {\it fat point} of {\it multiplicity} $j$ and {\it support} at $P\in\PP^2$ is the $0$-dimensional scheme defined by the $j$-th power $\wp^j$ of the ideal $\wp$ defining the point $P$. We denote it by $jP$.}
	Observe that, a homogeneous polynomial $f \in S$ belongs to $\wp^j$ if and only if all partial derivatives of $f$ of order $j-1$ vanish at $P$. This gives ${j+1 \choose 2}$ linear equations, which justifies the following definition. 
	
	\begin{definition}\label{def:unexpected_curve}
		{Let $Z = P_1+\ldots+P_s$ be a set of reduced points in $\PP^2$. We say that $Z$ admits {\bf unexpected curves of degree $j+1$} if, for a general point $Q \in \PP^2$, we have that 
		$$
		\dim_{\CC}[I(Z+jQ)]_{j+1} > \max\left\{\dim_{\CC}[I(Z)]_{j+1} - {j+1 \choose 2}, 0\right\},
		$$
		where $I(Z+jQ) = I(Z) \cap I(Q)^j$. }
	\end{definition}
	The general problem in this theory is the following. 
	\begin{problem}\label{question:A}
		Classify all configurations of points $Z$ that admit unexpected curves.
	\end{problem}
	If $Z$ has general support, then it is well known that there are no unexpected curves of any degree. The following is an example coming from \cite{DIV14} of $9$ points which admits an unexpected quartic (see also \cite[Example 4.1.10]{Har17}).
		\begin{example}\label{example:Brian}
		The configuration is constructed, step by step, as follows (see Figure \ref{fig:1}). Consider four general points in the projective plane (the black dots). Then, there are three pairs of lines that contain all four points. 
		Each pair has a singular point (the three dotted circles). Then, draw the line through two of these three points (the dotted line) and take the two points (the two white circles) where this line intersects the pair of lines whose singular point is the third point. This gives five additional points. The space of quartics passing through this configuration of points is $6$-dimensional, therefore, we expect to have no quartics with an additional general triple point (denoted as the two concentric blue circles). However, there exists an unexpected quartic. See Example \ref{example:Brian_coordinates} for an explicit construction in projective coordinates.
\end{example}
\begin{figure}\label{fig:1}
  \begin{center}
    \includegraphics[width=0.7\textwidth]{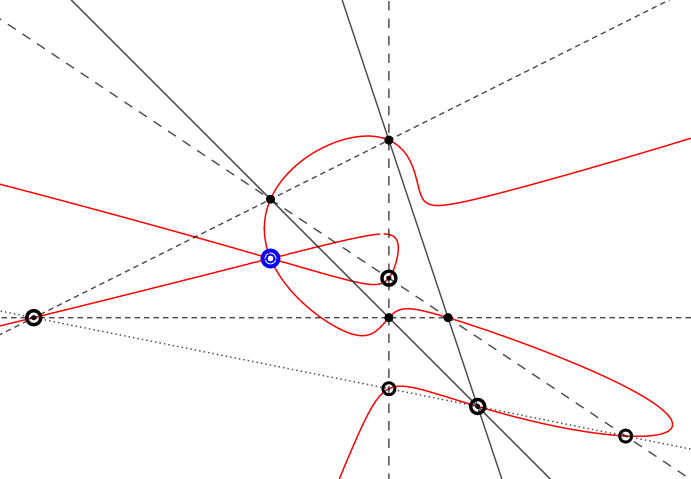}
  \end{center}
  \caption{The configuration unexpected quartic of Example \ref{example:Brian}.}
\end{figure}
{
In \cite{FGST18}, Farnik, Galuppi, Sodomaco and Trok show that this is, up to isomorphism, the only example of a configuration of points in the projective plane admitting an unexpected quartic.}

The configuration described in the example has a very special combinatorics {in relation to the $B_3$ arrangement (see \cite[Example 1.7]{OT}). We} describe it in more detail in the next section, see Example \ref{example:Brian_coordinates}. In \cite{CHMN17}, the authors connect the existence of unexpected curves for a configuration of points to the computation of the {\it splitting type} of the dual line arrangement $\caA_Z$ whose lines are defined by the linear equations having as coefficients the coordinates of the points in $Z$. We explain later in more detail these connection, but, in order to mention one of the main results in \cite{CHMN17} for the reader already familiar with these combinatorial concepts, a necessary condition for a set of points $Z$ to admit an unexpected curve in degree $j+1$ is that $a_Z \leq j \leq b_Z-2$, where $(a_Z,b_Z)$ is the splitting type of $\caA_Z$ \cite[Theorem 1.5]{CHMN17}.
	
	\smallskip
	In this paper, we generalize Example \ref{example:Brian} to infinite families of configurations having unexpected curves. In particular, while studying the problem, we noticed that the configuration given in the Example \ref{example:Brian} is the dual configuration of points to an arrangement of lines described in a paper of Gr\"unbaum \cite{Gru09} where the author explains particular families of (real) line arrangements. After some experiments with the algebra software {\it Macaulay2} \cite{Macaulay2} and {\it Singular} \cite{DGPS}, Gr\"unbaum's paper inspired us to find the examples we describe in this paper. The families of line arrangements that we consider here are {\it simplicial}, i.e., arrangements of lines where every cell is a triangle, or {\it near-simplicial}, i.e., sometimes we also have quadrilateral cells. As nicely explained in Gr\"unbaum's paper, these arrangements occur in the literature as examples and counterexamples in many contexts of algebraic combinatorics and its applications. In this case, we related them to a new interesting question on polynomial interpolation for plane curves.
	
	\subsection*{Structure of the paper.} In Section \ref{sec:basic}, we recall the basic notions and constructions of algebraic geometry and combinatorics that we need to analyse the problem. In Section \ref{sec:results}, we consider particular families of line arrangements that give unexpected curves. In Section \ref{appendix:further examples}, we provide more sporadic examples of line arrangements whose dual configurations have unexpected curves, but that we could not extend these to a general class of examples.
	
	\subsection*{Acknowledgements.}
	This project started during the ``2017 Pragmatic Summer School: Powers of ideals
and ideals of powers'' which was held at the University of Catania, Italy (June 19th - July 7th, 2017). We are grateful to the organizers (Alfio Ragusa, Elena Guardo, Francesco Russo and Giuseppe Zappal\'a) and the teachers (Brian Harbourne, Adam Van Tuyl, Enrico Carlini and T\`{a}i H\`{a}) of the school. In particular, we want to thank Brian Harbourne for suggesting and supervising this project and for useful comments on an early version of this paper. We also want to thank Michael Cuntz for sharing with us a database of crystallographic simplicial arrangements. The first author was partially supported by the "National Group for Algebraic and Geometric Structure, and their Applications" (GNSAGA-INdAM). The second author was partially supported by National Science Centre, Poland, grant 2016/21/N/ST1/01491. The third author was partially supported by  the Aromath team of INRIA Sophia Antipolis M\'editerran\'ee (France).
	\section{Basic notions and constructions}\label{sec:basic}
	
	In this section, we describe the main combinatorial objects we want to consider. For more details, we refer to the classical textbook on hyperplane arrangements by Orlik and Terao \cite{OT}.
	
	\subsection*{Dual line arrangement.}
	Given a configuration of reduced points $Z = P_1 + \ldots + P_d \subset \PP^2$, we consider the arrangement $\caA_Z$ of dual lines $L_1,\ldots,L_d$ in the dual space $(\PP^2)^\vee$. More precisely, if $P_i = (p_{i,0}:p_{i,1}:p_{i,2})$, for any $i = 1,\ldots,d$, then we define the line $L_i := \{p_{i,0}y_0 + p_{i,1}y_1 + p_{i,2}y_2 = 0\}$, where $T = \CC[y_0,y_1,y_2]$ is the coordinate ring of the dual plane. Moreover, if $\ell_i\in T_1$ is the linear form defining the line $L_i$, for any $i = 1,\ldots,d$, the arrangement $\caA_Z$ is defined by the polynomial $f_Z = \ell_1\cdots\ell_d \in T_d$.
	
	\begin{remark}
		When we say that a line arrangement admits unexpected curves we implicitly mean that the dual configuration of points admits unexpected curves, as defined in Definition \ref{def:unexpected_curve}.
	\end{remark}
	
	\subsection*{Splitting type of line arrangements.} Let $\caA$ be a line arrangement of $d$ lines and let $f_\caA \in T_d$ be the polynomial of degree $d$ defining it. We consider {the map defined by the gradient $\nabla_\caA = [\partial_{y_0}f_\caA,~\partial_{y_1}f_\caA,~\partial_{y_2}f_\caA]$}
	$$
		\mathcal{O}_{\PP^2}^3 \xrightarrow{\nabla_\caA} \mathcal{O}_{\PP^2}(d-1).
	$$ 
	We call the kernel of such a map the {\it derivation bundle} of $\caA$, i.e., the rank $2$ vector bundle $\mathcal{D}_\caA$ defined by
	$$
		0 \rightarrow \mathcal{D}_\caA \rightarrow \mathcal{O}_{\PP^2}^3 \xrightarrow{J_\caA} \mathcal{O}_{\PP^2}(d-1).
	$$
	{Up to a twist, the derivation bundle is isomorphic to the {\it syzygy bundle} of the {\it Jacobian ideal} of $f_\mathcal{A}$, i.e., the ideal $J_\mathcal{A} = (\partial_{y_0}f_\caA,\partial_{y_1}f_\caA,\partial_{y_2}f_\caA)$ generated by the first partial derivatives of the polynomial $f_\mathcal{A}$.}
\begin{definition}
	A line arrangement $\caA$ is said to be \textbf{free} with {\bf exponents}, or {\bf splitting type}, $(a_\caA,b_\caA)$ if $\mathcal{D}_\caA$ is free, i.e., if it splits as $\mathcal{D}_\caA = \mathcal{O}_{\PP^2}(-a_\caA) \oplus \mathcal{O}_{\PP^2}(-b_\caA)$.
\end{definition}
	{In general, the restriction of the derivation bundle on any line $\ell$ splits as $\mathcal{D}_\caA|_\ell = \mathcal{O}_{\PP^2}(-a) \oplus \mathcal{O}_{\PP^2}(-b)$. The splitting type $(a,b)$ is constant on a Zariski open subset of the dual projective plane, i.e., it is constant on a general line. We call this the {\it splitting type} of $\caA$ when the arrangement is not free. For details on these facts, we refer to \cite[Appendix]{CHMN17}.}
	
	If $\caA$ is a free line arrangement, then we have that the resolution of $S/J_{\caA}$ has length $2$. In particular, the resolution is
\[0 \rightarrow T(-(d-1)-a_\caA)\oplus T(-(d-1)-b_\caA) \rightarrow T(-(d-1))^3 \rightarrow T \rightarrow T/J_{\caA} \rightarrow 0,\]
where $a_\caA, b_\caA \in \mathbb{N}$ satisfy $a_\caA+b_\caA=d-1$. 

\begin{remark}
	The fact that the characteristic of the field does not divide $\deg(f_\caA)$ is crucial for this construction. The notion of a free line arrangement can be given more generally for any characteristic, but it is more complicated and it is not needed for the purposes of this paper. For this reason, in order to make the exposition clearer, we decided to give a definition which relies on the fact that we are in characteristic $0$ {and we refer to \cite{CHMN17} for the general case.}
\end{remark}

\begin{remark}
	If the line arrangement $\caA$ is actually the dual arrangement $\caA_Z$  of a configuration of points $Z$, we denote its splitting type by $(a_Z,b_Z)$.
\end{remark}

\subsection*{Conditions for unexpected curves.} 
Finally, we give the connection between {\it unexpected curves} for a configuration of points and the {\it splitting type of the dual line arrangement}. These are the main results in \cite{CHMN17} that motivated this project.

In \cite{FV14}, the authors associate to a set of reduced points $Z$ a {\it multiplicity index} defined as
$$
	m_Z := \min\{j~|~ \dim_{\CC}[I(Z+jQ)]_{j+1} > 0, \text{ for a general point } Q\}.
$$
In \cite[Lemma 3.5(i)]{CHMN17}, the authors associate directly the multiplicity index to the splitting type of the dual line arrangement $\caA_Z$. In particular, they proved that
\begin{equation}\label{eq: m = a}
	m_Z = \min\{a_Z,b_Z\}.
\end{equation}
Consequently, they obtain a characterization for configurations of points which admit unexpected curves. Here, another important numerical character is given by 
$
	t_Z := \min\left\{i ~|~ \dim_{\CC}[I(Z)]_{i+1} > {i+1 \choose 2}\right\}.
$
\begin{theorem}{\rm \cite[Theorem 1.1]{CHMN17}}
	Let $Z$ be a configuration of points in $\PP^2$ and let $\caA_Z$ be its dual line arrangement with splitting type $(a_Z,b_Z)$, say $a_Z \leq b_Z$. Then, $Z$ admits unexpected curves if and only if $a_Z < t_Z$. In this case, $Z$ admits an unexpected curve of degree $j+1$ if and only if $a_Z \leq j \leq b_Z - 2$.
\end{theorem}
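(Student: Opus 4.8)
The plan is to reduce the statement to the rank of one linear map and then extract everything from the splitting of $\mathcal{D}_{\caA_Z}$ on a general line. Fix an integer $j$ and a general point $Q$. Imposing multiplicity $j$ at $Q$ on a form of degree $j+1$ is the vanishing of all its partial derivatives of order $\le j-1$, so there is an exact sequence
\[ 0\longrightarrow [I(Z+jQ)]_{j+1}\longrightarrow [I(Z)]_{j+1}\xrightarrow{\ \rho_Q\ }\CC^{\binom{j+1}{2}}, \]
and hence $\dim_\CC[I(Z+jQ)]_{j+1}=\dim_\CC[I(Z)]_{j+1}-\operatorname{rk}\rho_Q$. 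The bound on the right of Definition \ref{def:unexpected_curve} is precisely the value obtained when $\rho_Q$ has maximal rank, and the left-hand side always dominates it; therefore $Z$ admits unexpected curves of degree $j+1$ if and only if $\rho_Q$ fails to be of maximal rank for general $Q$.

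I would first dispose of the small degrees using the multiplicity index. If $j<t_Z$ then $\dim_\CC[I(Z)]_{j+1}\le\binom{j+1}{2}$, the expected value is $0$, and by the previous paragraph degree $j+1$ is unexpected exactly when $\dim_\CC[I(Z+jQ)]_{j+1}>0$, i.e.\ when $j\ge m_Z=a_Z$ by \eqref{eq: m = a}. Reading the same inequality in reverse gives $a_Z\le t_Z$ unconditionally: were $t_Z<a_Z$, then in degree $t_Z+1$ the expected value would be positive while the actual dimension vanishes, contradicting that the actual dimension is never smaller than the expected one. Thus the only alternatives are $a_Z=t_Z$ and $a_Z<t_Z$, and in the second case degree $a_Z+1$ is already unexpected, which settles the ``if'' direction of the equivalence.

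The technical core is to compute $\dim_\CC[I(Z+jQ)]_{j+1}$ from the derivation bundle. For general $Q$ the dual line $\ell$ is general, so $\mathcal{D}_{\caA_Z}|_\ell\cong\caO_\ell(-a_Z)\oplus\caO_\ell(-b_Z)$, and the apolarity identification of the conditions $I(Z)\cap\wp^{\,j}$ at $Q$ with sections of this restriction (carried out in \cite[Appendix]{CHMN17}, following \cite{FV14}) yields
\[ \dim_\CC[I(Z+jQ)]_{j+1}=\max\{0,\,j-a_Z+1\}+\max\{0,\,j-b_Z+1\}. \]
So $a_Z$ is the first degree carrying a section, and the second summand shows, symmetrically, that for $j\ge b_Z-1$ the fat point $jQ$ imposes independent conditions on $[I(Z)]_{j+1}$; there the actual and expected dimensions agree and no unexpected curve arises. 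It remains to compare the two dimensions in the window $a_Z\le j\le b_Z-2$, where the displayed quantity equals $j-a_Z+1>0$. Using the same bundle to control the Hilbert function of $Z$, one shows that the expected value $\max\{0,\dim_\CC[I(Z)]_{j+1}-\binom{j+1}{2}\}$ stays strictly below $j-a_Z+1$ throughout this window precisely when $Z$ does not over-impose conditions in low degree, i.e.\ precisely when $a_Z<t_Z$; when $a_Z=t_Z$ the larger Hilbert function absorbs the putative excess and no degree is unexpected. Together with the second paragraph this gives both the equivalence ``$Z$ admits unexpected curves $\iff a_Z<t_Z$'' and, in that case, the exact range $a_Z\le j\le b_Z-2$.

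The step I expect to be the main obstacle is the displayed dimension formula and the accompanying control of the Hilbert function. Proving them requires two ingredients: the duality translating the fat-point conditions into sections of $\mathcal{D}_{\caA_Z}|_\ell$ (equivalently, into the syzygies among the powers $\ell_1^{\,j+1},\dots,\ell_d^{\,j+1}$), and the genericity statement that a general $Q$ gives a non-jumping line, so that the generic splitting type $(a_Z,b_Z)$ is the one governing the count. Once genericity is secured and the Hilbert function is pinned down for $j\ge b_Z-1$ and inside the window, the remaining comparison is the elementary arithmetic of the two $\max$ expressions above.
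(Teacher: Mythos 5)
A preliminary remark: the paper does not prove this statement at all --- it is quoted verbatim from \cite[Theorem 1.1]{CHMN17} --- so there is no in-paper argument to compare yours against. Measured against the argument in the cited source, your skeleton is the right one and essentially reproduces it: the restriction exact sequence for $\rho_Q$, the identification of $\dim_\CC[I(Z+jQ)]_{j+1}$ with $h^0$ of the restriction of the derivation bundle to the general dual line, the resulting formula $\max\{0,j-a_Z+1\}+\max\{0,j-b_Z+1\}$, the equality $m_Z=a_Z$, and the deduction $a_Z\le t_Z$ are all correct and correctly deployed, as is the treatment of the range $j<t_Z$.

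The genuine gap is the sentence ``Using the same bundle to control the Hilbert function of $Z$, one shows that the expected value \dots stays strictly below $j-a_Z+1$ throughout this window precisely when \dots $a_Z<t_Z$.'' This is not a step of a proof; it is a restatement of the hard half of the theorem. Writing $e_j:=\dim_\CC[I(Z)]_{j+1}-\binom{j+1}{2}=2j+3-h_Z(j+1)$, where $h_Z$ is the Hilbert function of $Z$, what must actually be shown is: (i) if $a_Z<t_Z$ then $e_j\le j-a_Z$ for all $t_Z\le j\le b_Z-2$; and (ii) if $a_Z=t_Z$ then $e_j\ge j-a_Z+1$ for all $a_Z\le j\le b_Z-2$, so that nothing in the window is unexpected. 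Neither follows from the displayed dimension formula; both require controlling the growth of $e_j$, i.e., the first difference $\Delta h_Z$. For (i) one uses that $\Delta h_Z\ge 1$ until $h_Z$ reaches $|Z|$ (so $e_{j+1}\le e_j+1$ there, and $e_{t_Z-1}\le 0$ starts the induction), together with the observation that once $h_Z(j+1)=|Z|$ one has $e_j=2j+3-|Z|\le j-a_Z$ exactly when $j\le b_Z-2$; for (ii) one needs the Macaulay-type fact that once $\Delta h_Z(i)\le i$ the first difference is non-increasing, forcing $e_{j+1}\ge e_j+1$ past $t_Z$. This Hilbert-function bookkeeping is where the content of the theorem lives, and it is absent from your write-up. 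By contrast, your claim that no unexpected curve occurs for $j\ge b_Z-1$ does not need any further ``control of the Hilbert function'': there $A_j:=\dim_\CC[I(Z+jQ)]_{j+1}=2j+3-|Z|$, while always $A_j\ge e_j$ and $e_j=2j+3-h_Z(j+1)\ge 2j+3-|Z|=A_j\ge 0$, so actual and expected coincide formally; you should say this explicitly instead of deferring it to the same unproved Hilbert-function assertion.
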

Therefore, a solution to Problem \ref{question:A} is given by the following theorem.
\begin{theorem}{\rm \cite[Theorem 1.5]{CHMN17}}
	Let $Z$ be a configuration of points in $\PP^2$ and let $\caA_Z$ be its dual line arrangement with splitting type $(a_Z,b_Z)$. Then, $Z$ admits an unexpected curve of degree $j+1$ if and only if:
	\begin{enumerate}
		\item[\rm i.] $a_Z \leq j \leq b_Z - 2$;
		\item[\rm ii.] $\dim_{\CC}[I(Z)]_{t_Z} = {t_Z+1 \choose 2} - |Z|$,
	\end{enumerate}
	where $|Z|$ denotes the cardinality of the set $Z$.
\end{theorem}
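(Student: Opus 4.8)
The plan is to read the statement off the preceding characterization, \cite[Theorem 1.1]{CHMN17}, which asserts that $Z$ carries an unexpected curve of degree $j+1$ precisely when $a_Z<t_Z$ \emph{and} $a_Z\le j\le b_Z-2$. The second condition is verbatim item (i), so the entire equivalence reduces to the single claim that, for the configuration $Z$, item (ii) is equivalent to the strict inequality $a_Z<t_Z$. I would first dispose of the trivial inequality in one direction: since a fat point of multiplicity $i$ imposes at most $\binom{i+1}{2}$ conditions, the very definition of $t_Z$ gives $\dim_{\CC}[I(Z+t_ZQ)]_{t_Z+1}\ge \dim_{\CC}[I(Z)]_{t_Z+1}-\binom{t_Z+1}{2}>0$, whence $m_Z\le t_Z$; combined with $m_Z=\min\{a_Z,b_Z\}$ this yields $a_Z\le t_Z$ always. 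Thus ``$a_Z<t_Z$'' is the same as ``$a_Z\neq t_Z$'', and everything hinges on detecting the borderline case $a_Z=t_Z$ on the level of the Hilbert function of $Z$.

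The main computational tool I would invoke is the exact dimension count that underlies the identity $m_Z=\min\{a_Z,b_Z\}$, obtained by restricting the derivation bundle $\mathcal{D}_{\caA_Z}$ to the general line dual to $Q$:
$$\dim_{\CC}[I(Z+jQ)]_{j+1}=\max\{j-a_Z+1,\,0\}+\max\{j-b_Z+1,\,0\}.$$
Assuming $a_Z\le b_Z$ and taking $j$ in the range of item (i), the second summand vanishes and this actual dimension equals $j-a_Z+1>0$. Confronting it with the expected dimension $\max\{\dim_{\CC}[I(Z)]_{j+1}-\binom{j+1}{2},\,0\}$ of Definition \ref{def:unexpected_curve}, and using $\binom{j+1}{2}+(j+1)=\binom{j+2}{2}$, unexpectedness in degree $j+1$ collapses to the inequality $\dim_{\CC}[I(Z)]_{j+1}<\binom{j+2}{2}-a_Z$. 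Evaluating at the bottom endpoint $j=a_Z$ gives the transparent equivalence: unexpectedness in degree $a_Z+1$ holds iff $\dim_{\CC}[I(Z)]_{a_Z+1}\le\binom{a_Z+1}{2}$, which by the definition of $t_Z$ (together with $a_Z\le t_Z$) is exactly $a_Z<t_Z$. This already recovers \cite[Theorem 1.1]{CHMN17} and isolates the numerical heart of the matter.

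The remaining, genuinely delicate, step is to upgrade the inequality $\dim_{\CC}[I(Z)]_{a_Z+1}\le\binom{a_Z+1}{2}$ --- phrased at degree $a_Z+1$ through the splitting invariant $a_Z$ --- into item (ii), phrased at the intrinsic degree $t_Z$ purely through the Hilbert function of $Z$, namely that $Z$ imposes independent conditions there. This is where I expect the real work, because the dimensions on the left above are governed solely by the pair $(a_Z,b_Z)$ while the expected dimensions are governed by the Hilbert function of $Z$, which may be non-generic, and the two encodings must be matched. Concretely, I would feed the displayed dimension formula back into the comparison across all degrees $j+1$ with $a_Z\le j\le b_Z-2$ and use that the Hilbert function of a reduced scheme is eventually sharp in the sense that, once the points impose independent conditions in one degree, they do so in every higher degree; this shows the unexpectedness condition is uniform over the whole range and may therefore be tested at the single degree attached to $t_Z$, yielding exactly (ii). Reconciling these two descriptions of the same dimension is the crux; once it is in place, the conjunction of (i) and (ii) coincides with the conjunction ``$a_Z\le j\le b_Z-2$ and $a_Z<t_Z$'' of \cite[Theorem 1.1]{CHMN17}, which is the assertion.
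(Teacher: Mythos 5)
The paper does not prove this statement at all---it is quoted from \cite[Theorem 1.5]{CHMN17} and used as a black box---so there is no internal argument to compare yours against; it has to stand on its own, and it does not. Your reductions are sound as far as they go: granting \cite[Theorem 1.1]{CHMN17} and the identity $m_Z=\min\{a_Z,b_Z\}$ (both also imported, unproved, from the same source), the theorem does collapse to the single claim that condition (ii) is equivalent to $a_Z<t_Z$; the inequality $a_Z\le t_Z$ via ``a fat point of multiplicity $t_Z$ imposes at most $\binom{t_Z+1}{2}$ conditions'' is correct; and the computation showing that unexpectedness in degree $a_Z+1$ amounts to $\dim_{\CC}[I(Z)]_{a_Z+1}\le\binom{a_Z+1}{2}$, i.e.\ to $a_Z<t_Z$, is correct. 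But that only re-derives Theorem 1.1. The step you explicitly flag as ``the remaining, genuinely delicate, step''---passing from the inequality $a_Z<t_Z$ to the Hilbert-function statement (ii) at degree $t_Z$---is the \emph{entire} content of Theorem 1.5 beyond Theorem 1.1, and your sketch of it does not work. ``Test the unexpectedness condition at the single degree attached to $t_Z$'' is not even well posed, because $t_Z$ generally lies \emph{outside} the range $a_Z\le j\le b_Z-2$: already when $Z$ imposes independent conditions in all degrees one computes $t_Z=\lceil(a_Z+b_Z-1)/2\rceil$, which equals $b_Z-1$ when $b_Z=a_Z+2$ (and equals $4=b_Z-1\notin\{3\}$ in the paper's own quartic example). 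What is actually needed is a two-way link between the failure of $Z$ to impose independent conditions in degree $t_Z$ and the collapse $t_Z\le a_Z$; in \cite{CHMN17} this is where the collinearity condition of Theorem \ref{thm:unexpected_curve_condition_2}(ii) enters, and none of that analysis appears in your proposal.

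A separate problem you should have caught by confronting the precise form of (ii): as printed, the statement is inconsistent with Example \ref{example:Brian_coordinates}. There $|Z|=9$, $(a_Z,b_Z)=(3,5)$, $\dim_{\CC}[I(Z)]_4=6$, and with the paper's definition of $t_Z$ one gets $t_Z=4$, so (ii) reads $6=\binom{5}{2}-9=1$, which is false even though the unexpected quartic exists and (i) holds. The binomial coefficient should evidently be $\binom{t_Z+2}{2}$, i.e.\ (ii) should say that $Z$ imposes independent conditions on forms of degree $t_Z$. Since your argument never pins down which Hilbert-function identity it is ultimately trying to establish, it could not have detected (let alone repaired) this discrepancy, which is further evidence that the final equivalence was not actually carried out.
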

From these results, it is clear that there is a close connection between the definition of unexpected curve for a set of points $Z$ and the splitting type of the dual line arrangement. Hence, our problem translates to a question about splitting types of line arrangements. 
{By \eqref{eq: m = a}, the splitting type can be computed with any algebra software by finding the least $j$ such that $[I(Z+jQ)]_{j+1} \neq 0$, for a general point $Q$. Unfortunately, this computation is very slow and inefficient because require to consider a field $\mathbb{F}$ which contains all the coordinates of the points in $Z$ and then, if $Q = (q_0:q_1:q_2)$, work over the field extension $\mathbb{F}(q_0,q_1,q_2)$.} 

In the next section, we focus on special line arrangements for which we can compute the splitting type and, consequently, deduce if the dual configuration of points admits an unexpected curve of some degree or not. Our computation mostly relies on the well-known Addition-Deletion Theorem {and, as far as we know, this is the only theoretical tool to compute the splitting type without doing it by direct computation.}


We now recall a different version of the results in \cite{CHMN17} which is the precise way we use the aforementioned characterization of configurations of points having unexpected curves. 

	\begin{theorem}{\rm \cite[Theorem 1.2]{CHMN17}}\label{thm:unexpected_curve_condition_2}
		Let $Z\subset \PP^2$ be a finite set of points and let $(a_Z,b_Z)$ be the splitting type of the dual line arrangement, with $a_Z \leq b_Z$. Then, $Z$ admits an unexpected curve if and only if 
		\begin{enumerate}
			\item[\rm i.] $2a_Z + 2 < |Z|$;
			\item[\rm ii.] no subset of $a_Z + 2$ (or more) of the points is collinear.
		\end{enumerate}
		In this case, $Z$ has an unexpected curve of degree $j$ if and only if $a_Z < j \leq |Z| - a_Z - 2 = b_Z-1$.
	\end{theorem}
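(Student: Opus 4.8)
The plan is to deduce the statement from the two characterizations already recorded above, namely \cite[Theorem 1.1]{CHMN17} and \cite[Theorem 1.5]{CHMN17}, by translating their numerical conditions into the combinatorics of $Z$. First I would record the normalization $a_Z + b_Z = |Z| - 1$: restricting the defining sequence $0 \to \mathcal{D} \to \caO_{\PP^2}^3 \to \caO_{\PP^2}(d-1)$ (with $d = |Z|$) to a general line $\ell$ gives $\deg(\mathcal{D}|_\ell) = -(d-1)$, so the two exponents of $\mathcal{D}|_\ell = \caO_{\PP^2}(-a_Z) \oplus \caO_{\PP^2}(-b_Z)$ sum to $d-1 = |Z|-1$. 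With this identity, condition (i) becomes purely formal: $2a_Z + 2 < |Z|$ is equivalent to $a_Z \le b_Z - 2$, which is exactly the requirement that the degree window $a_Z \le j \le b_Z - 2$ of \cite[Theorem 1.1]{CHMN17} be non-empty; and rewriting that window for the curve degree $D = j+1$ yields $a_Z < D \le b_Z - 1 = |Z| - a_Z - 2$, the asserted range. So the whole numerical skeleton of the statement is immediate once condition (ii) is matched to the cohomological hypothesis.

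It therefore remains to prove that, granting (i), condition (ii) is equivalent to the hypothesis $\dim_{\CC}[I(Z)]_{t_Z} = \binom{t_Z+1}{2} - |Z|$ of \cite[Theorem 1.5]{CHMN17} (equivalently, to $a_Z < t_Z$ in \cite[Theorem 1.1]{CHMN17}). This is the heart of the matter, and I would isolate it as a lemma: writing $e$ for the largest number of collinear points of $Z$, I claim that $Z$ admits an unexpected curve if and only if (i) holds and $e \le a_Z + 1$.

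For the necessity of (ii) I would argue by B\'ezout on a minimal unexpected curve. Assuming $Z$ has an unexpected curve, \cite[Theorem 1.1]{CHMN17} produces one of minimal degree $D = a_Z + 1$, say $C$, singular of order $a_Z$ at a general point $Q$ and containing $Z$. If some line $\ell$ carried $e \ge a_Z + 2$ points of $Z$, then $|C \cap \ell| \ge e > \deg C$, so $\ell \subseteq C$ and $C = \ell \cup C'$ with $\deg C' = a_Z$. Since $Q \notin \ell$ for general $Q$, the multiplicity $a_Z$ at $Q$ is carried entirely by $C'$, forcing $C'$ to be a union of $a_Z$ lines through $Q$; as each point of $Z \setminus \ell$ lies on one such line and the lines $\overline{QP}$ are distinct for general $Q$, we obtain $a_Z \ge |Z| - e$, i.e. $e \ge b_Z + 1$. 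Thus every value $a_Z + 2 \le e \le b_Z$ is contradictory, and the only surviving possibility is $e \ge b_Z + 1$, equivalently $|Z \setminus \ell| \le a_Z$. In that boundary case $C$ degenerates to the cone $\ell \cup \bigcup_{P \in Z\setminus\ell} \overline{QP}$, which exists for \emph{every} $Q$; the delicate point, which I expect to need a separate dimension count, is to verify that this cone is then \emph{expected}, so that $Z$ carries no genuinely unexpected curve, contradicting the choice of $C$.

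The main obstacle is the converse: producing an unexpected curve from (i) and $e \le a_Z + 1$, equivalently deriving the equality $\dim_{\CC}[I(Z)]_{t_Z} = \binom{t_Z+1}{2} - |Z|$. I would prove the contrapositive at the level of Hilbert functions: if that equality failed, then $Z$ would carry more forms of low degree than the no-collinearity hypothesis permits. Concretely, an excess of sections of $[I(Z)]_i$ for small $i$ can be converted — by repeatedly splitting off a line meeting $Z$ in more than $i$ points (again B\'ezout) and residuating on the complementary subscheme — into a line containing at least $a_Z + 2$ points of $Z$, contradicting (ii). Making this quantitative, that is, tracking the Hilbert function of the residual schemes so that the recursion reproduces exactly the threshold $a_Z + 2$ and interacts correctly with the general point $Q$ defining $a_Z = m_Z$, is the technical core and the place where I expect the real work to lie.
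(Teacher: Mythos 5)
This statement is quoted verbatim from \cite[Theorem 1.2]{CHMN17}; the paper supplies no proof of it, so there is no internal argument to compare your proposal against and it must be judged on its own terms. Your numerical bookkeeping is correct and is indeed how the statement lines up with \cite[Theorem 1.1]{CHMN17} and \cite[Theorem 1.5]{CHMN17}: restricting the defining sequence of $\mathcal{D}_{\caA}$ to a general line gives $a_Z+b_Z=|Z|-1$, condition (i) is then equivalent to the non-emptiness of the window $a_Z\le j\le b_Z-2$, and the stated degree range for the curve follows. The B\'ezout argument for the necessity of (ii) is also sound as far as it goes, and the ``delicate point'' you flag can be closed along the lines you suggest: when a line $\ell$ carries $e\ge b_Z+1$ points, every curve of degree $a_Z+1$ through $Z$ must contain $\ell$ (B\'ezout again, since $e>a_Z+1$), the residual $|Z|-e\le a_Z$ points impose independent conditions on forms of degree $a_Z$, and a direct count gives $\dim_{\CC}[I(Z+a_ZQ)]_{a_Z+1}=e-b_Z=\dim_{\CC}[I(Z)]_{a_Z+1}-\binom{a_Z+1}{2}$, so nothing unexpected occurs in degree $a_Z+1$, which by \cite[Theorem 1.1]{CHMN17} suffices to conclude.

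The genuine gap is the converse, which you correctly identify as ``the technical core'' but do not supply. What must be proved is: if no $a_Z+2$ points of $Z$ are collinear and $|Z|>2a_Z+2$, then the Hilbert function of $Z$ satisfies $h_Z(j)\ge 2j+1$ for all $j\le a_Z+1$, equivalently $\dim_{\CC}[I(Z)]_{i+1}\le\binom{i+1}{2}$ for \emph{every} $i\le a_Z$ (not just $i=a_Z$, since $t_Z$ is defined as a minimum), which is the condition $a_Z<t_Z$ of \cite[Theorem 1.1]{CHMN17}. For $i<a_Z$ the obvious statement ``$2i+3$ points with no $i+3$ collinear impose independent conditions in degree $i+1$'' does not apply directly, because (ii) only forbids $a_Z+2$ collinear points, which is a weaker hypothesis at those intermediate degrees. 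The residuation recursion you sketch is the right genre of argument --- it amounts to the classical analysis of the first difference of $h_Z$, extracting a long collinear subset whenever that difference drops to $1$ too early --- but as written it sets up no induction, does not control the Hilbert functions of the residual schemes, and does not explain why the threshold that emerges is exactly $a_Z+2$. Until that lemma is proved, the proposal is a correct reduction of the theorem to its hardest ingredient rather than a proof.
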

	
	{Theorem \ref{thm:unexpected_curve_condition_2} gives a criterion for {\it existence} of unexpected curves, but \cite{CHMN17} studies also conditions for {\it uniqueness} of such curves.
	
	\begin{proposition}\cite[Corollary 5.5]{CHMN17}\label{prop:unexpected_curve_unique}
		Let $Z \subset \PP^2$ be a finite set of points admitting unexpected curves and let $(a_Z,b_Z)$ be the splitting type of the dual line arrangement, with $a_Z \leq b_Z$. Then, $Z$ has a unique unexpected curve $C$ in degree $a_Z+1$. Moreover, for any $a_Z < j \leq b_Z-1$ the unexpected curves of degree $j$ are precisely the curves $C + L_1 +\ldots + L_r$, with $r = j - a_Z - 1$, where the $L_i$'s are arbitrary lines passing through the general point at which $C$ is singular.
	\end{proposition}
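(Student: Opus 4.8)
The plan is to deduce the whole statement from one exact dimension count, namely
\[
\dim_{\CC}[I(Z + mQ)]_{m+1} = m - a_Z + 1 \qquad\text{for } a_Z \leq m \leq b_Z - 2 \text{ and general } Q,
\]
and then extract uniqueness and the structure of the higher-degree curves by elementary linear algebra. To establish this formula I would invoke the bundle-theoretic correspondence underlying the characterization recalled above: for a general point $Q$ the dual line $\ell_Q \subset (\PP^2)^\vee$ is general, hence by the definition of the splitting type $\mathcal{D}_{\caA_Z}|_{\ell_Q} \cong \mathcal{O}_{\PP^1}(-a_Z) \oplus \mathcal{O}_{\PP^1}(-b_Z)$, and the identification of $[I(Z + mQ)]_{m+1}$ with $H^0\big(\ell_Q, \mathcal{D}_{\caA_Z}|_{\ell_Q}(m)\big)$ (see \cite[Appendix]{CHMN17}) yields $\dim = \max(m - a_Z + 1, 0) + \max(m - b_Z + 1, 0)$. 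Throughout the range $a_Z \leq m \leq b_Z - 2$ the second term vanishes and the first equals $m - a_Z + 1$, which is the claimed value. Note that the hypothesis that $Z$ admits unexpected curves forces $a_Z \leq b_Z - 2$, so this range is nonempty.

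For the uniqueness statement I would take $m = a_Z$, where the formula gives dimension $1$: up to scalar there is a single unexpected curve $C$ of degree $a_Z + 1$. It then remains to check that $C$ has multiplicity exactly $a_Z$ at $Q$. A priori the multiplicity is $\geq a_Z$; were it $a_Z + 1 = \deg C$, then $C$ would be a union of $a_Z + 1$ lines through $Q$. For general $Q$ no line through $Q$ contains two points of $Z$, so such a curve meets $Z$ in at most $a_Z + 1$ points, contradicting $|Z| > 2a_Z + 2$ from Theorem \ref{thm:unexpected_curve_condition_2}(i). Hence $C$ is singular at $Q$ of multiplicity exactly $a_Z$, and $Q$ is the general point at which $C$ is singular.

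For the higher-degree curves, fix $j$ with $a_Z < j \leq b_Z - 1$ and set $m = j - 1$ and $r = j - a_Z - 1$. Multiplication by $C$ gives a map $[I(Q)^r]_r \to [I(Z + (j-1)Q)]_j$, $g \mapsto Cg$: the product $Cg$ vanishes on $Z$, has degree $(a_Z + 1) + r = j$, and has multiplicity $a_Z + r = j - 1$ at $Q$, so it lands in the target. This map is injective since $C \neq 0$; its source has dimension $\dim[I(Q)^r]_r = r + 1 = j - a_Z$, and by the dimension formula the target has the same dimension $(j-1) - a_Z + 1 = j - a_Z$. Hence the map is an isomorphism, so every unexpected curve of degree $j$ is $Cg$ with $g \in [I(Q)^r]_r$. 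Since $g$ is a form of degree $r$ with a point of multiplicity $r$ at $Q$, it factors as a product $L_1 \cdots L_r$ of $r$ lines through $Q$, giving precisely the description $C + L_1 + \ldots + L_r$ (the case $r = 0$ recovering the uniqueness of $C$).

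The hard part is the first paragraph: pinning down the identification of $[I(Z+mQ)]_{m+1}$ with global sections of the restricted derivation bundle, with the correct twist, which is exactly where the input of \cite{FV14,CHMN17} is indispensable. Once the exact value $m - a_Z + 1$ is available, the rest is forced: uniqueness is the one-dimensionality of the bottom piece, and every higher unexpected curve is compelled by the dimension count to be $C$ times a product of lines through its singular point.
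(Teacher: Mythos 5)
The paper does not prove this proposition at all: it is imported verbatim as \cite[Corollary 5.5]{CHMN17}, so there is no in-paper argument to compare against. Your reconstruction is correct and follows the same route as the cited source: the exact count $\dim_{\CC}[I(Z+mQ)]_{m+1}=\max\{m-a_Z+1,0\}+\max\{m-b_Z+1,0\}$ coming from the splitting $\mathcal{D}_{\caA_Z}|_{\ell_Q}\cong\mathcal{O}_{\PP^1}(-a_Z)\oplus\mathcal{O}_{\PP^1}(-b_Z)$ on the general dual line, uniqueness at $m=a_Z$ from one-dimensionality, and the identification of the degree-$j$ space with $C\cdot[I(Q)^r]_r$ by an injective multiplication map between spaces of equal dimension, together with the fact that a degree-$r$ element of $[I(Q)^r]_r$ splits into $r$ lines through $Q$. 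Your argument that $C$ is not a cone over $Q$ (via $|Z|>2a_Z+2$ and the genericity of $Q$) is a clean way to get that the multiplicity at $Q$ is exactly $a_Z$. The only point worth adding is that for ``$C$ is singular at $Q$'' to be literal you also need $a_Z\geq 2$; this does hold, since $a_Z\leq 1$ is incompatible with conditions (i) and (ii) of Theorem \ref{thm:unexpected_curve_condition_2} (for $a_Z=0$ condition (ii) forbids any two collinear points, and for $a_Z=1$ one checks directly that no conic can be unexpected for a general $Q$), but it deserves a sentence. As you note, the whole weight rests on the dimension formula from \cite{CHMN17}, which is logically prior to their Corollary 5.5, so there is no circularity.
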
 		
	In particular, when $Z$ admits unexpected curves, there is always a unique unexpected curve of degree $a_Z+1$.
	}
	
\section{Line arrangements with expected and unexpected behavior}\label{sec:results}
	While we were studying the problem and, in particular, Example \ref{example:Brian}, we noticed that {the dual line arrangement of the configuration of points appears under the name $A(9,1)$ in the list of simplicial line arrangements given by \cite{Gru09} (see also \cite{Cuntz} for updated list of these arrangements).} Here is the same example from this point of view.
	\begin{example}\label{example:Brian_coordinates}
	We construct a configuration of points in projective plane as described in Example \ref{example:Brian}. Consider the four vertices of a square: $(1:1:1),~(1:-1:1),~(-1:1:1)$ and $(-1:-1:1)$ and the intersection point of the diagonals of the square, i.e., the point $(0:0:1)$, and the intersections (at infinity) of the two pairs of parallel lines corresponding to the sides of the square, i.e., the points $(1:0:0)$ and $(0:1:0)$. Then, the line at infinity meets the two diagonals in two extra points $(1:1:0)$ and $(1:-1:0)$. Thus, we have a set $Z$ of nine points whose dual line arrangement $\caA_Z$ is defined by the polynomial $f = xyz(x+y+z)(x-y+z)(-x+y+z)(-x-y+z)(x+y)(x-y)$ and is depicted in Figure \ref{fig:A(9,1)}.
		
\begin{figure}[h]
	\begin{center}
			\subfigure[The configuration of points.]{
			\includegraphics{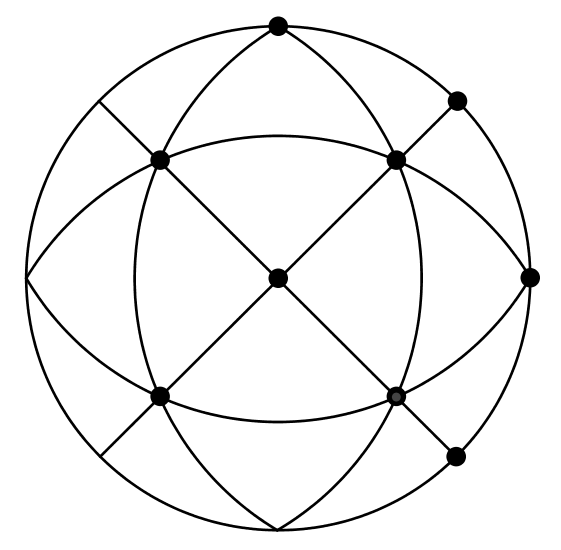}
			}~~~
			\subfigure[The dual line arrangement.]{
			\includegraphics{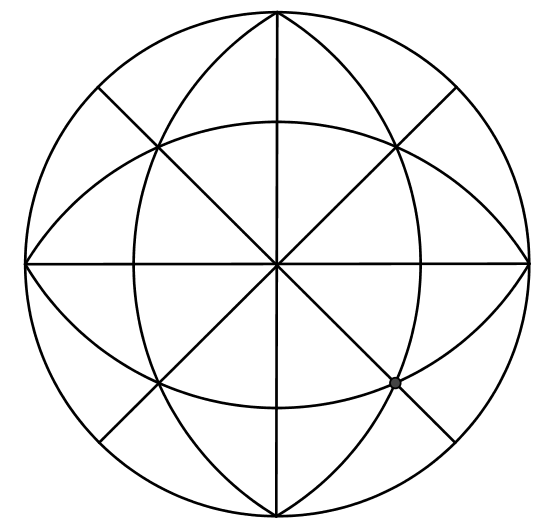}
			\label{fig:A(9,1)}
			}
	\end{center}
	\caption{The configuration of points in the projective plane and the dual line arrangement constructed in Example \ref{example:Brian_coordinates}. The pictures represent the projective plane and we use the classical model of the projective plane where the line at infinity is represented by a circle on which opposite points are identified. For this reason, some straight lines are represented by circular curves.}
\end{figure}
\end{example}
	
	Now, we look at families of line arrangements generalising the one constructed in Example \ref{example:Brian_coordinates}. In particular, we analyse their splitting type in order to establish for which arrangements the dual configuration of points admits unexpected curves of certain degrees.  
	
	\subsection{Supersolvable arrangements}
	We consider now a special family of line arrangements.
	\begin{definition}
		A line arrangement $\caA$ is called {\bf supersolvable} if there exists a {\bf modular} point, i.e., a point $P$ such that for every point $Q \in {\rm Sing}(\caA)$, the line joining $P$ and $Q$ is an element of $\caA$. 
	\end{definition}
	We denote the {\it multiplicity} of a point $P$ with respect to the arrangement $\caA$ as
	$
		m(P,\caA) := \left| \{\ell \in \caA ~|~ P \in \ell\} \right|.
	$ Moreover, we define $\Sing_k(\caA) := \{P\in \Sing(\caA) ~|~ m(P,\caA) = k\}$ and $\Sing_{\geq k}(\caA) := \bigcup_{i \geq k} \Sing_i(\caA)$.
	
	A useful property of supersolvable line arrangements is the following.
	\begin{lemma}{\rm \cite[Lemma 2.1]{AT16}}\label{lem:multiplicity_bound_supersolvable}
		Let $\caA$ be a supersolvable line arrangement. Let $P,Q\in{\rm Sing}(\caA)$ such that $P$ is modular and $Q$ is not. Then, $m(P,\caA) > m(Q,\caA)$. In particular, if a point has multiplicity 
		$$m(\caA)=\max \left\{m(P,\caA) \; | \; P \in{\rm Sing}(\caA) \right\},$$
		then it is modular.
	\end{lemma}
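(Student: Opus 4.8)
The plan is to prove the strict inequality $m(Q,\caA) < m(P,\caA)$ by constructing an explicit injection from the pencil of lines through $Q$ into the pencil of lines through $P$ that fails to be surjective; the ``in particular'' clause then follows formally. Throughout I write $p = m(P,\caA)$ and $q = m(Q,\caA)$, and I note at the outset that, since $Q\in\Sing(\caA)$ and $P$ is modular, the line $\overline{PQ}$ joining them belongs to $\caA$.

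First I would extract a witness of the non-modularity of $Q$: by definition there is a point $R\in\Sing(\caA)$, $R\neq Q$, with $\overline{QR}\notin\caA$. From this and the modularity of $P$ I would deduce, one verification at a time, a small cluster of incidence facts: that $R\neq P$ (otherwise $\overline{QR}=\overline{PQ}\in\caA$); that the line $\overline{PR}$ lies in $\caA$ (modularity applied to the singular point $R$); that $Q\notin\overline{PR}$ (otherwise $\overline{QR}=\overline{PR}\in\caA$); and, since $R$ is singular, that there is a second arrangement line $h$ through $R$ with $h\neq\overline{PR}$, which then avoids both $P$ and $Q$ (it cannot contain $P$ without equalling $\overline{PR}$, nor contain $Q$ without forcing $\overline{QR}=h\in\caA$). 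This transversal $h$ is the object I build the map on.

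The core step is the map $\Phi$ sending a line $\ell\ni Q$ of $\caA$ to the line $\overline{P\,(\ell\cap h)}$. Because $Q\notin h$, each such $\ell$ meets $h$ in a single point $\ell\cap h$, which is a singular point of $\caA$; modularity of $P$ then guarantees $\overline{P\,(\ell\cap h)}\in\caA$, so $\Phi$ genuinely lands in the pencil of lines through $P$. Injectivity is immediate: distinct lines through $Q$ meet $h$ in distinct points (their only common point $Q$ lies off $h$), and distinct points of $h$ are joined to $P$ by distinct lines. Finally I would show that $\overline{PR}$ is not in the image: since $h\cap\overline{PR}=R$, a line $\ell\ni Q$ satisfies $\Phi(\ell)=\overline{PR}$ precisely when $\ell\cap h=R$, i.e. when $R\in\ell$, i.e. when $\overline{QR}\in\caA$ — exactly what fails. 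Hence $\Phi$ is injective but not onto, which gives $q<p$. The ``in particular'' statement then drops out: supersolvability furnishes a modular point $P_0$, and if a point $X$ of maximal multiplicity were not modular, the inequality just proved (with $P=P_0$, $Q=X$) would yield $m(X,\caA)<m(P_0,\caA)\leq m(X,\caA)$, a contradiction.

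The main obstacle is the design of $\Phi$. To land inside the pencil through $P$ one needs a transversal $h$ missing both $P$ and $Q$, whose mere existence is not obvious in degenerate configurations — indeed, if every line passed through $P$ or $Q$, a short computation shows both points would be modular, so the hypothesis that $Q$ is non-modular is essential here. The key realization is that the non-modularity witness $R$ supplies such an $h$ for free, and supplies it in the one position (through $R$) that makes the escaped line $\overline{PR}$ fall outside the image. Extracting both well-definedness and non-surjectivity from the single choice ``$h=$ a second arrangement line through $R$'' is the crux; the remaining steps are routine incidence bookkeeping in $\PP^2$.
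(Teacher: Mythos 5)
Your argument is correct: every step checks out --- the extraction of the non-modularity witness $R$, the incidence facts ($R\neq P$, $\overline{PR}\in\caA$, $Q\notin\overline{PR}$, and the existence of a second line $h$ through $R$ avoiding both $P$ and $Q$), the well-definedness and injectivity of $\Phi$, the fact that $\overline{PR}$ lies outside its image (since $h\cap\overline{PR}=\{R\}$ and $\overline{QR}\notin\caA$), and the formal deduction of the ``in particular'' clause. Note that the paper itself offers no proof of this statement --- it is quoted directly from \cite[Lemma 2.1]{AT16} --- so there is no in-paper argument to compare against; your projection of the pencil at $Q$ through a transversal onto the pencil at $P$ is essentially the argument given in that reference.
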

	
	\begin{definition}
		Let $\{\ell_1,\ldots, \ell_s \} \subset S_1$ be the set of the linear polynomials defining the lines of a supersolvable line arrangement $\caA$. We say that $\caA $ has {\bf full rank} if $\dim_{\CC}{\rm Span (\ell_1,\ldots, \ell_s )} =3.$
	\end{definition}
	
	In this section, we want to understand when supersolvable line arrangements admit unexpected curves. We give a necessary and sufficient condition to guarantee that a supersolvable line arrangement admits no unexpected curves and then we exhibit an infinite family of cases where we have unexpected curves. This family generalizes the configuration described in Example \ref{example:Brian_coordinates}. Our main tool is Theorem \ref{thm:unexpected_curve_condition_2} and, in order to use it, we need to compute the splitting type of supersolvable line arrangements. This is an easy application of the following well-known result which holds also in the more general setting of hyperplane arrangements. 
%

	\begin{theorem}{\rm (Addition-Deletion Theorem; see \cite[Theorem 4.51]{OT})}\label{thm:addition-deletion}
		Let $\caA$ be a line arrangement in $\PP^2$ and $\ell \in \caA$. Let $\caA' := \caA \setminus\{\ell\}$. If the following conditions hold:
		\begin{enumerate}
			\item $\caA'$ is free and has splitting type $(a,b)$;
			\item $|{\rm Sing}(\caA) \cap \ell| = b+1$ (or $a+1$, respectively);
		\end{enumerate} 
		then, $\caA$ is free with splitting type $(a+1,b)$ (or $(a,b+1)$, respectively).
	\end{theorem}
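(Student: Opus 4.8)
The statement is the classical Addition--Deletion Theorem of Terao, and I would reconstruct its proof in the language of logarithmic derivation modules, from which the assertion about splitting types follows by passing to the associated sheaf $\mathcal{D}_\caA$. Write $S=\CC[y_0,y_1,y_2]$ and let $D(\caA)\subset\mathrm{Der}_\CC(S)$ be the module of $\caA$-logarithmic derivations, i.e.\ the derivations $\theta$ with $\theta(f_\caA)\in(f_\caA)$; its sheafification (modulo the Euler derivation) is exactly the rank-two bundle $\mathcal{D}_\caA$ of the excerpt, and $\caA$ is free with splitting type $(a,b)$ precisely when $D(\caA)$ is a free $S$-module with homogeneous generators in degrees $\{1,a,b\}$, the $1$ being the Euler derivation. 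The first step is a reduction to central arrangements in $\CC^3$ by coning, so that $\caA$, $\caA'=\caA\setminus\{\ell\}$ and the restriction $\caA''=\caA^\ell$ (the trace of $\caA'$ on $\ell$, whose hyperplanes are the points of $\Sing(\caA)\cap\ell$) all become central and freeness may be tested by Saito's criterion.

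The heart of the argument is the exact sequence of $S$-modules attached to the triple $(\caA,\caA',\caA'')$,
\[
0 \longrightarrow D(\caA) \longrightarrow D(\caA') \xrightarrow{\ \rho\ } D(\caA''),
\]
where $\rho$ is the restriction of a derivation to $\ell$. Left-exactness is the containment $D(\caA)\subseteq D(\caA')$ together with the identification $\ker\rho=D(\caA)$, a direct computation using $f_\caA=\alpha_\ell\, f_{\caA'}$ and the Leibniz rule: a derivation already in $D(\caA')$ lies in $D(\caA)$ if and only if it additionally preserves the ideal $(\alpha_\ell)$, i.e.\ if and only if its restriction to $\ell$ vanishes. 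Next I would record that any rank-two arrangement is free, so $\caA''$ is free with $\exp(\caA'')=(1,k-1)$, where $k=|\Sing(\caA)\cap\ell|$ is precisely the quantity in hypothesis (2).

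With $\caA'$ free and $\exp(\caA')=(1,a,b)$, the numerics then force the conclusion. Terao's criterion says that $\caA$ is free exactly when $\exp(\caA')$ is recovered from $\exp(\caA'')$ by adjoining a single exponent $e-1$, in which case $\exp(\caA)$ replaces that $e-1$ by $e$. Hypothesis (2) in the form $k=b+1$ makes $\exp(\caA'')=(1,b)=\exp(\caA')\setminus\{a\}$, so the incremented slot is $a\mapsto a+1$, giving $\exp(\caA)=(1,a+1,b)$, that is, splitting type $(a+1,b)$; symmetrically $k=a+1$ yields $(a,b+1)$. To convert this bookkeeping into an actual freeness statement I would invoke Saito's criterion: assemble the $3\times 3$ coefficient matrix of a candidate generating set of $D(\caA)$ — the Euler derivation, a logarithmic lift through $\rho$ of the degree-$(k-1)$ generator of $D(\caA'')$, and the surviving generator of $D(\caA')$ — and check that its determinant is a nonzero scalar multiple of $f_\caA$.

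The main obstacle is the exactness of the Terao sequence on the right, equivalently the surjectivity of $\rho$ in the degree dictated by the exponents, which is what guarantees that the generator of $D(\caA'')$ admits a logarithmic lift; this is the unique point where hypothesis (2) is genuinely used and where real work is required. Once this surjectivity in the relevant degree is secured, the Saito determinant computation and the exponent count are routine, and translating freeness of $D(\caA)$ back through the sheafification gives the stated splitting type of $\mathcal{D}_\caA$.
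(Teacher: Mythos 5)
The paper itself offers no proof of this statement --- it is quoted verbatim from \cite[Theorem 4.51]{OT} --- so the only meaningful comparison is with the classical argument in that reference, and your outline does follow its route: cone to a central arrangement, form the triple $(\caA,\caA',\caA'')$, observe that the rank-two restriction $\caA''$ is automatically free with exponents $(1,k-1)$ where $k=|\Sing(\caA)\cap\ell|$, and close with Saito's criterion once a logarithmic lift of the generator of $D(\caA'')$ is produced. You also correctly isolate the one step that requires genuine work, namely the right-exactness of the Terao sequence in the relevant degree.

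There is, however, a concrete error in the central exact sequence, and it propagates into your Saito computation. The sequence you write, $0\to D(\caA)\to D(\caA')\xrightarrow{\rho} D(\caA'')$ with $\rho$ ``restriction to $\ell$,'' does not type-check: a derivation $\theta\in D(\caA')$ induces a derivation of $\ell$ only when $\theta(\alpha_\ell)\in(\alpha_\ell)$, i.e.\ only when $\theta$ already lies in $D(\caA)$, and for $\theta\in D(\caA)$ the induced derivation on $\ell$ need not vanish --- you are conflating ``preserves the ideal of $\ell$'' with ``restricts to zero on $\ell$.'' The correct Terao sequence is $0\to D(\caA')\xrightarrow{\,\cdot\alpha_\ell\,} D(\caA)\xrightarrow{\rho} D(\caA'')$, with $\rho$ defined on $D(\caA)$ and kernel $\alpha_\ell D(\caA')$. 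This matters downstream: the Saito matrix must be assembled from elements of $D(\caA)$, so the ``surviving generator of $D(\caA')$,'' of degree $a$, must enter multiplied by $\alpha_\ell$, hence in degree $a+1$ --- this is exactly where the increment $a\mapsto a+1$ in the splitting type comes from. With your three candidates (the Euler derivation, a lift of degree $k-1=b$, and the degree-$a$ generator of $D(\caA')$) the degrees sum to $1+a+b=|\caA|-1$ rather than $\deg f_\caA=|\caA|$, so Saito's criterion cannot certify freeness of $\caA$. Once the sequence is corrected, the exponent bookkeeping and your identification of the surjectivity of $\rho$ as the crux are sound.
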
 
	
	Now, we can compute the splitting type for supersolvable line arrangements.
	
	\begin{lemma}\label{lem:splitting_supersolvable}
		Let $\caA$ be a supersolvable line arrangement where $d := |\caA|$ and $m := m(\caA)$. Then, the splitting type of $\caA$ is $(m-1,d-m)$.
	\end{lemma}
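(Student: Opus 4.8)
The plan is to realise $\caA$ as a sequence of single-line additions starting from a pencil, so that Theorem \ref{thm:addition-deletion} (Addition--Deletion) can track the exponents one line at a time. By Lemma \ref{lem:multiplicity_bound_supersolvable}, a point $P$ attaining the maximal multiplicity $m=m(\caA)$ is modular; I fix such a $P$ and write $\ell_1,\dots,\ell_m$ for the lines of $\caA$ through $P$ and $\ell_{m+1},\dots,\ell_d$ for the remaining $d-m$ lines, none of which contains $P$. For $0\le k\le d-m$ set $\caA_k=\{\ell_1,\dots,\ell_{m+k}\}$, so that $\caA_0$ is the pencil through $P$ and $\caA_{d-m}=\caA$.

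The combinatorial core of the argument is the following claim: for every line $\ell\in\caA$ not passing through $P$ one has $|\Sing(\caA)\cap\ell|=m$. Indeed, any $Q\in\Sing(\caA)\cap\ell$ satisfies $Q\neq P$ (because $P\notin\ell$), so by modularity the line $PQ$ lies in $\caA$; being a line through $P$ it must be one of the $\ell_i$, whence $Q=\ell\cap\ell_i$. Conversely each $\ell\cap\ell_i$ lies on two distinct lines and is therefore singular, and the $m$ points $\ell\cap\ell_1,\dots,\ell\cap\ell_m$ are pairwise distinct since two lines through $P$ meet only at $P\notin\ell$. This gives exactly $m$ singular points, and the only inputs used are that $P$ is modular with precisely $m$ lines through it.

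To propagate this count through the whole tower I would first observe that each $\caA_k$ inherits the same modular structure: since $\Sing(\caA_k)\subseteq\Sing(\caA)$ and all $m$ lines through $P$ belong to $\caA_k$, the point $P$ is modular of multiplicity $m$ in every $\caA_k$. Hence the claim applies verbatim to $\caA_k$ and its newest line, giving $|\Sing(\caA_k)\cap\ell_{m+k}|=m$ for every $1\le k\le d-m$, independently of the order of addition or of the finer incidences among the added lines. A direct computation of the syzygies of the Jacobian ideal (after moving $P$ to $(0:0:1)$, so that $f_{\caA_0}$ involves only two variables) shows that the base case $\caA_0$ is free with splitting type $(0,m-1)$.

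Finally I would run the induction: $\caA_k$ is free with splitting type $\{k,\,m-1\}$ for all $0\le k\le d-m$, the base case $k=0$ being the pencil. For the step from $\caA_{k-1}$ to $\caA_k$, the pair $\{k-1,\,m-1\}$ always contains $m-1$, so the count $|\Sing(\caA_k)\cap\ell_{m+k}|=m=(m-1)+1$ matches the hypothesis of Theorem \ref{thm:addition-deletion} for the exponent $m-1$; the theorem then keeps that exponent fixed and raises the other from $k-1$ to $k$, yielding $\{k,\,m-1\}$ and preserving freeness. (At the symmetric step where $k-1=m-1$ both exponents equal $m-1$ and either choice gives the same unordered pair, so the ambiguity is harmless.) Taking $k=d-m$ gives the splitting type $(m-1,d-m)$ of $\caA$ (an unordered pair $\{m-1,\,d-m\}$), as asserted. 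The main obstacle is pinning down the intersection count in the claim: once it is forced to the constant $m$, the Addition--Deletion bookkeeping is automatic and order-independent.
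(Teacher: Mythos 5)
Your proof is correct and follows essentially the same route as the paper: decompose $\caA$ into the pencil through a modular point of maximal multiplicity plus the remaining $d-m$ lines, and apply the Addition--Deletion Theorem inductively using the fact that each added line meets the current arrangement in exactly $m$ singular points. The only cosmetic differences are that you justify the constant intersection count by observing that $P$ stays modular in every intermediate arrangement (the paper instead notes that intersections of two added lines lie on lines through the modular point), and you settle the base case by a direct syzygy computation rather than a second induction on $m$.
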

	\begin{proof}
		Let $O$ be a modular point with maximal multiplicity and consider $\caA = \caA_0 \cup \caA_1$, where $\caA_0$ is the subset of lines passing through the modular point $O$ and $\caA_1$ is the subset of lines not passing through $O$. Then, $m = |\caA_0|$ and denote $m' = |\caA_1|$. Namely, $d = m+m'$. We proceed by induction on $m'$.
		
		Let $m' = 0$. We have {that} $\caA = \caA_0$ is a central line arrangement given by $m$ lines passing through the point $O$. We compute the splitting type in this case by induction on $m$. If $m = 2$, it is easy to check that by definition the splitting type is equal to $(1,0)$. If $m>2$, by the Addition-Deletion Theorem and inductive hypothesis, we have that the splitting type of $\caA$ is $(m-1,0)$. 
		
		If $m' = 1$, let $\ell \in \caA_1$. Then, $$\Big|\bigcup_{\ell' \in \caA_0}\ell' \cap \ell\Big| = m$$ and, by the Addition-Deletion Theorem, we have that the splitting type of $\caA$ is $(m-1,1)$. If $m' > 1$, let $\caA_1 = \{\ell_1,\ldots,\ell_{m'}\}$. Then, we notice that, since $O$ is modular point, for every pair $\ell_i,\ell_j\in\caA_1$, the intersection $\ell_i\cap\ell_j$ lies on a line in $\caA_0$. Therefore, for each $i = 2,\ldots,m'$, if $\caA^{(i)} = \caA_0 \cup \{\ell_1,\ldots,\ell_{i-1}\}$, we have
		$$
			\Big|\bigcup_{\ell' \in \caA^{(i)}}\ell' \cap \ell_i\Big| = m,
		$$
	Therefore, by the Addition-Deletion Theorem and inductive hypothesis, we conclude that the splitting type of $\caA$ is $(m-1,m') = (m-1,d-m)$.
	\end{proof}
	
	We are now ready to give a necessary and sufficient condition for supersolvable arrangements to admit unexpected curves. Here, we denote $d := |\caA|$ and $m := m(\caA)$.
	\begin{theorem}\label{prop:supersolvable_unexpected_condition}
		A supersolvable line arrangement $\caA$ admits unexpected curves if and only if $d > 2m$, where $d$ is the number of lines and $m$ is the maximum multiplicity of a point of intersection of the lines of $\caA$. Moreover, if $d = 2m + 1$, there is a unique unexpected curve and it has degree $m$. 
	\end{theorem}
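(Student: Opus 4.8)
The plan is to reduce the statement entirely to the numerical criterion of Theorem~\ref{thm:unexpected_curve_condition_2}, feeding it the splitting type computed in Lemma~\ref{lem:splitting_supersolvable}. Writing $Z$ for the dual configuration, so that $\caA = \caA_Z$ and $|Z| = d$, Lemma~\ref{lem:splitting_supersolvable} tells us that the splitting type of $\caA_Z$ is the unordered pair $\{m-1,\,d-m\}$. The first thing I would record is which of these two integers is the smaller, since Theorem~\ref{thm:unexpected_curve_condition_2} is phrased with $a_Z \le b_Z$: one has $a_Z = m-1$ exactly when $d \ge 2m-1$, and $a_Z = d-m$ when $d \le 2m-2$. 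This bookkeeping is what ultimately makes the inequality $d > 2m$ appear.

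The conceptual heart of the argument is translating condition~(ii) of Theorem~\ref{thm:unexpected_curve_condition_2} — that no $a_Z+2$ of the points are collinear — through the duality between $Z$ and $\caA_Z$. A subset of $k$ points of $Z$ is collinear if and only if the $k$ dual lines share a common point, i.e. if and only if $\caA_Z$ has a singular point of multiplicity at least $k$. Hence the size of the largest collinear subset of $Z$ equals the maximal multiplicity $m(\caA)=m$, realized by the $m$ dual lines through a point of multiplicity $m$ (such a point exists by definition of $m$, and is modular by Lemma~\ref{lem:multiplicity_bound_supersolvable}). In particular no $m+1$ points of $Z$ are collinear, while some $m$ of them are.

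With these two facts in hand the proof is a short case analysis. If $d > 2m$ then $a_Z = m-1$ and $b_Z = d-m$; condition~(i) reads $2a_Z+2 = 2m < d$, which holds, and condition~(ii) asks that no $a_Z+2 = m+1$ points be collinear, which is exactly the duality fact above, so $Z$ admits unexpected curves. Conversely, if $d \le 2m$ I would split into the range $d \in \{2m-1,2m\}$, where $a_Z = m-1$ and condition~(i) fails since $2m \not< d$, and the range $d \le 2m-2$, where $a_Z = d-m$ and the $m$ collinear dual points violate condition~(ii) because $m \ge d-m+2 = a_Z+2$. Either way there are no unexpected curves, completing the equivalence. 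The main obstacle is precisely this last collinearity estimate: to rule out unexpected curves for small $d$ one must exhibit a forced large collinear subset, and it is exactly the point of maximal multiplicity that produces it, so the duality translation of condition~(ii) is the step that really needs care.

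Finally, for the uniqueness claim when $d = 2m+1$, I would note that then $a_Z = m-1$ and $b_Z = m+1$, so by Theorem~\ref{thm:unexpected_curve_condition_2} an unexpected curve of degree $j$ exists only for $m-1 < j \le b_Z - 1 = m$, forcing $j = m$; and Proposition~\ref{prop:unexpected_curve_unique} guarantees that the unexpected curve in degree $a_Z+1 = m$ is unique. Together these give exactly one unexpected curve, of degree $m$, as claimed.
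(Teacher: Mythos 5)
Your proof is correct and follows essentially the same route as the paper's: both arguments feed the splitting type from Lemma~\ref{lem:splitting_supersolvable} into Theorem~\ref{thm:unexpected_curve_condition_2}, split into cases according to whether $m-1$ or $d-m$ is the smaller exponent, translate condition~(ii) into a bound on multiplicities of singular points of the dual arrangement, and invoke Proposition~\ref{prop:unexpected_curve_unique} for uniqueness. Your handling of the case $d\le 2m-2$ (exhibiting the $m$ collinear dual points to violate condition~(ii)) matches the paper's observation that conditions~(i) and~(ii) are incompatible there.
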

	\begin{proof}
		We use Theorem \ref{thm:unexpected_curve_condition_2}. We split the proof in {two cases: (1) $m-1\leq d-m$ and (2) $m-1>d-m$}. Observe that condition (ii) of Theorem \ref{thm:unexpected_curve_condition_2}, namely requiring to have no subset of $m_Z+2$, or more, collinear points in the configuration of points is equivalent to requiring that the multiplicity of the intersection points in the dual line arrangement is at most $m_Z+1$. Then:
		\begin{enumerate}
			\item if $m - 1 \leq d-m$, by Lemma \ref{lem:splitting_supersolvable}, $m_Z = m - 1$. Therefore, by Lemma \ref{lem:multiplicity_bound_supersolvable}, we may conclude that condition (ii) of Theorem \ref{thm:unexpected_curve_condition_2} is always satisfied.
Then, it is enough to observe that, since $m_Z = m-1$, condition (i) is equivalent to having $2m < d$;
			\item if $d-m < m-1$, from condition (ii) of Theorem \ref{thm:unexpected_curve_condition_2}, we get that the multiplicity of each intersection point in the line arrangement is at most $m_Z+1$. In particular, $m<m_Z+2=d-m+2$, hence $d>2m-2$. From the condition (i), we have instead that $2(d-m)+2<d$, so $d<2m-2$. As these two conditions are incompatible, this situation cannot occur.
		\end{enumerate}
		Note that, conversely, when $2m<d$ we are sure to be in the first case and then, the proof is concluded.
		
		{Uniqueness directly follows from Proposition \ref{prop:unexpected_curve_unique}.}
	\end{proof}
	
{
	\begin{remark}
		In a recent paper, Dimca and Sticlaru introduced the notion of {\it nearly supersolvable line arrangement} \cite{DS17}. They define a {\it nearly modular} point to be a point $P \in \Sing(\caA)$ such that:
		\begin{itemize}
			\item[(i)] for any point $Q \in \Sing(\caA)$, with the exception of a unique point of multiplicity $2$, say $P'$, $\overline{PQ}\in\caA$;
			\item[(ii)] $\overline{PP'} \cap \Sing(\caA) = \{P,P'\}$.
		\end{itemize} 
		Then, a line arrangement is {\it nearly supersolvable} if it has a nearly modular point. 
		
		Let $\caA$ be a nearly supersolvable line arrangement with $m = m(\caA)$ and $d = |\caA|$. In \cite[Corollary 3.2]{DS17}, Dimca and Sticlaru prove that the splitting type of $\caA$ is $(d-m,m-1)$, if $2m \geq d$, and $(\left\lfloor d/2 \right\rfloor,\left\lfloor d/2 \right\rfloor)$, if $2m < d$. By using the same idea of the proof of Theorem \ref{prop:supersolvable_unexpected_condition}, it follows that nearly supersolvable arrangements do not admit unexpected curves. 
	\end{remark}
}
	
	In the case of supersolvable real arrangements, we have the following propriety.
	
	\begin{proposition}{\rm \cite[Corollary 2.3]{AT16}}\label{prop:AT16}
		Let $\caA$ be a full rank supersolvable real arrangement. Then,
		$$
			|{\rm Sing}_2(\caA)| + m(\caA) \geq |\caA|.
		$$
	\end{proposition}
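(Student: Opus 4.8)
The plan is to normalize the arrangement by its modular point and then feed the reality hypothesis into a face‑counting (Melchior‑type) inequality. By Lemma \ref{lem:multiplicity_bound_supersolvable} a point $O$ of maximal multiplicity $m=m(\caA)$ is modular. I would choose a projective change of coordinates sending $O$ to the point at infinity and sending some line through $O$ that does \emph{not} belong to $\caA$ (one exists, since $\caA$ is finite) to the line at infinity. Then the $m$ lines of $\caA$ through $O$ become $m$ parallel ``horizontal'' lines, and the remaining $n:=|\caA|-m$ lines are transversal to them. Modularity forces every singular point other than $O$ to lie on a horizontal line; moreover, since the line at infinity is not in $\caA$ and $\caA$ has full rank, no two transversals are parallel (two parallel transversals would meet at a point $Q\neq O$ at infinity with $\overline{OQ}$ equal to the line at infinity, which is not in $\caA$). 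Hence any two transversals meet in one finite point, again on a horizontal, and each transversal meets the horizontals in exactly $m$ distinct points, which are precisely the singular points lying on it. A point $Q\neq O$ is then a double point exactly when a single transversal passes through it, and by maximality of $m$ at most $m-1$ transversals pass through any point.

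Next I would record the two incidence identities obtained by double counting. Writing $p_b$ for the number of points $Q\neq O$ through which exactly $b$ transversals pass (so $1\le b\le m-1$ and $p_1=|\Sing_2(\caA)|$), counting transversal–horizontal incidences gives $\sum_b b\,p_b=nm$, and counting unordered pairs of transversals gives $\sum_b \binom{b}{2}p_b=\binom{n}{2}$. The goal is reduced to the inequality $p_1\ge n$.

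The reality of $\caA$ enters through Melchior's inequality: for a real projective arrangement that is not a pencil (ensured by full rank), the Euler characteristic of $\RR\PP^2$ together with the fact that every $2$-cell has at least three sides yields $t_2\ge 3+\sum_{k\ge 4}(k-3)\,t_k$, where $t_k$ is the number of $k$-fold points. Substituting the contribution of $O$ (a point of multiplicity $m$) and rewriting everything in terms of the $p_b$ turns this into a lower bound on the total number of non-modular singular points, namely $\sum_{b\ge 1}p_b\ge \tfrac{m(n+1)}{2}$. Combining this bound with the two incidence identities above — and using that every point carries at most $m-1$ transversals — should pin $p_1$ from below and force $p_1\ge n$, that is $|\Sing_2(\caA)|\ge |\caA|-m(\caA)$.

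The main obstacle is precisely this last combination. Melchior's inequality by itself only gives $p_1\ge m$, and each of the two identities used in isolation only yields the bound in the regime $n\le m$; the point is that the two moment relations $\sum_b b\,p_b=nm$ and $\sum_b b(b-1)p_b=n(n-1)$ must be used simultaneously, as a short moment/convexity estimate, to upgrade the Melchior bound to $p_1\ge n$, and it is here that the reality hypothesis is indispensable (Melchior's inequality has no analogue over $\CC$). An appealing alternative would be to prove the sharper statement that \emph{every} transversal passes through at least one double point, which gives $p_1\ge n$ immediately; this holds for instance when $m=3$ by a direct slope computation, but I expect a clean proof in general to require the straight‑line rigidity of the configuration rather than its combinatorics alone.
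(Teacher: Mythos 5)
The paper offers no proof of this statement --- it is quoted directly from \cite{AT16} --- so I am judging your argument on its own terms. Your normalization and bookkeeping are all correct: with the maximal modular point $O$ at infinity (it is modular by Lemma \ref{lem:multiplicity_bound_supersolvable}), every singular point $Q\neq O$ lies on exactly one of the $m$ horizontals and on $b\geq 1$ of the $n=|\caA|-m$ transversals, no two transversals are parallel, $p_1=|\Sing_2(\caA)|$, every double point lies on exactly one transversal, $b\leq m-1$ by maximality of $m$, the identities $\sum_b b\,p_b=nm$ and $\sum_b\binom{b}{2}p_b=\binom{n}{2}$ hold, and Melchior applies because full rank rules out a pencil. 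The gap is exactly where you flag it: you never derive $p_1\geq n$, ending instead with ``should pin $p_1$ from below.'' As written this is a plan, not a proof, and the worry is real, since the two identities alone give only $p_1=n(m-n+1)+\sum_{b\geq3}b(b-2)p_b$, which says nothing once $n>m+1$ (a case that does occur, e.g.\ $\overline{\polyA_4}$ has $n=5$, $m=4$).

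The good news is that your three ingredients do close the gap, and the missing ``moment estimate'' is one line. For $m\geq 3$, Melchior's inequality in your variables reads $p_1\geq m+\sum_{b\geq3}(b-2)p_b$ (the point $O$ contributes the summand $m-3$, and all double points are finite with $b=1$). Since $b\leq m-1$,
$$
\sum_{b\geq3}b(b-2)p_b\;\leq\;(m-1)\sum_{b\geq3}(b-2)p_b\;\leq\;(m-1)(p_1-m),
$$
and substituting this into $p_1=n(m-n+1)+\sum_{b\geq3}b(b-2)p_b$ and rearranging yields $(m-2)p_1\geq m(m-1)-n(m-n+1)$. Finally
$$
m(m-1)-n(m-n+1)-(m-2)n=(m-n)(m-n-1)\geq 0
$$
because it is a product of consecutive integers, so $p_1\geq n$, i.e.\ $|\Sing_2(\caA)|\geq|\caA|-m(\caA)$; the case $m=2$ forces $|\caA|=3$ and is trivial. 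You should write this computation out rather than leave it as a hope. Note also that this route is genuinely different from the one in \cite{AT16}, which establishes the stronger per-line statement you mention as an ``appealing alternative'' (every line missing the modular point of maximal multiplicity contains a double point) by a real-geometric argument; your Melchior argument recovers the corollary but not that refinement.
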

	
	As a direct consequence, we obtain the following.
	
	\begin{proposition}\label{prop:supersolvable_no_unexpected_condition}
		Let $\caA$ be a full rank supersolvable real line arrangement such that $|{\rm Sing}_2(\caA)| = \frac{d}{2}$. Then, $\caA$ admits no unexpected curves.
	\end{proposition}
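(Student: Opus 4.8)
The plan is to deduce this directly from the combination of Proposition \ref{prop:AT16} and the characterization of unexpected curves for supersolvable arrangements given in Theorem \ref{prop:supersolvable_unexpected_condition}. Writing $d = |\caA|$ and $m = m(\caA)$ as before, recall that by Theorem \ref{prop:supersolvable_unexpected_condition} the arrangement $\caA$ fails to admit unexpected curves precisely when $d \leq 2m$. So the entire task reduces to verifying the single numerical inequality $d \leq 2m$ under the stated hypotheses.

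First I would invoke Proposition \ref{prop:AT16}, which applies because $\caA$ is assumed to be a full rank supersolvable real arrangement. This yields the bound
$$
|{\rm Sing}_2(\caA)| + m(\caA) \geq |\caA|.
$$
Next I would substitute the hypothesis $|{\rm Sing}_2(\caA)| = \tfrac{d}{2}$ into this inequality. Rearranging gives $m \geq d - \tfrac{d}{2} = \tfrac{d}{2}$, that is, $2m \geq d$. By the characterization in Theorem \ref{prop:supersolvable_unexpected_condition}, this is exactly the condition under which $\caA$ admits no unexpected curves, which completes the argument.

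The statement is essentially a direct corollary, so I do not expect any genuine obstacle: all of the difficulty has already been absorbed into Proposition \ref{prop:AT16} (the bound on the number of double points of full rank real supersolvable arrangements) and into the equivalence in Theorem \ref{prop:supersolvable_unexpected_condition}. The only point requiring a modicum of care is to track the direction of the inequality: the hypothesis pins $|{\rm Sing}_2(\caA)|$ at exactly $\tfrac{d}{2}$, and it is the resulting lower bound $m \geq \tfrac{d}{2}$ --- equivalently, the failure of the strict inequality $d > 2m$ --- that forces the absence of unexpected curves.
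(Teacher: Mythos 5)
Your argument is correct and coincides with the paper's own proof: both deduce $2m \geq d$ from Proposition \ref{prop:AT16} together with the hypothesis $|{\rm Sing}_2(\caA)| = \tfrac{d}{2}$, and then conclude via the characterization in Theorem \ref{prop:supersolvable_unexpected_condition}. Your write-up merely makes the substitution and rearrangement explicit.
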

	\begin{proof}
		By Proposition \ref{prop:AT16}, we have $2m\geq d$. Hence the conclusion follows by Theorem \ref{prop:supersolvable_unexpected_condition}.
	\end{proof}
	
	\begin{remark}
		Examples of full rank supersolvable real arrangements with $|{\rm Sing}_2(\caA)| = \frac{d}{2}$ are given by the configurations of lines called \textit{B\"{o}r\"{o}czky examples}. These examples arise in the literature as sets of non-collinear points with the fewest number of ordinary lines, i.e. lines passing exactly through two points from the set. See \cite[Proposition 2.1]{GreenTao} for more details.
	\end{remark}
	
	\subsection{Polygonal arrangements}
	Now, we consider a family of arrangements {included in the list of {\it simplicial} line arrangements given by Gr\"unbaum} in \cite{Gru09}. 
	
	\subsection*{Construction.} Consider a regular polygon with $N \geq 3$ edges. We construct the following arrangement:
	\begin{enumerate}
		\item $e_i, ~i = 1,\ldots,N$: the lines corresponding to the sides of the $N$-gone;
		\item $m_i, ~i = 1,\ldots,N$: the lines corresponding to symmetry axes of the $N$-gone;
		\item $\ell_{\infty}$: the line at infinity.
	\end{enumerate}
	\begin{definition}
		\label{def:n-gonal arrangements}
		The line arrangement $\polyA_N = \{e_1,\ldots,e_N,m_1,\ldots,m_N\}$ is called the {\bf $N$-gonal arrangement}. The line arrangement $\overline{\polyA_{N}} = \polyA_N \cup \{\ell_\infty\}$ is called the {\bf complete $N$-gonal arrangment}.
	\end{definition}
	
	\begin{remark}
		Note that in the literature (see e.g. \cite{Gru09}) the arrangements $\polyA_N$ are denoted by $A(2N,1)$, while $\overline{\polyA_N}$ as $A(2N+1,1)$. These special configurations of lines are {\it simplicial arrangements}, i.e., all cells are triangles, and appear often as examples or counterexamples to various combinatorial problems.
	\end{remark}
	
	\begin{example}
		In the next figures, we describe the construction of the arrangements $\polyA_{4}$ and $\overline{\polyA_4}$. Note that the latter is precisely the arrangement considered in Example \ref{example:Brian_coordinates}.
	\end{example}
	
		\begin{figure}[h]
			\begin{center}
				\subfigure[The lines $e_i$'s corresponding to the sides of the square.]{
					\resizebox{!}{0.25\textwidth}{
						\begin{tikzpicture}[line cap=round,line join=round,x=1.0cm,y=1.0cm]
						\clip(-1.,-1.) rectangle (2.,2.);
						\draw [color=red,domain=-1.:2.] plot(\x,{(-0.-0.*\x)/1.});
						\draw [color=red] (0.,-1.) -- (0.,2.);
						\draw [color=red] (1.,-1.) -- (1.,2.);
						\draw [color=red,domain=-1.:2.] plot(\x,{(--1.-0.*\x)/1.});
						\end{tikzpicture}
					}
				}
				\qquad
				\subfigure[The lines $m_i$'s corresponding to the symmetries of the square.]{
					\resizebox{!}{0.25\textwidth}{
						\begin{tikzpicture}[line cap=round,line join=round,x=1.0cm,y=1.0cm]
						\clip(-1.,-1.) rectangle (2.,2.);
						\draw [dash pattern=on 2pt off 2pt,domain=-1.:2.] plot(\x,{(-0.-0.*\x)/1.});
						\draw [dash pattern=on 2pt off 2pt] (0.,-1.) -- (0.,2.);
						\draw [dash pattern=on 2pt off 2pt] (1.,-1.) -- (1.,2.);
						\draw [dash pattern=on 2pt off 2pt,domain=-1.:2.] plot(\x,{(--1.-0.*\x)/1.});
						\draw [color=red,domain=-1.:2.] plot(\x,{(-0.--1.*\x)/1.});
						\draw [color=red,domain=-1.:2.] plot(\x,{(--1.-1.*\x)/1.});
						\draw [color=red,domain=-1.:2.] plot(\x,{(--0.5-0.*\x)/1.});
						\draw [color=red] (0.5,-1.) -- (0.5,2.);
						\end{tikzpicture}
					}
				}
				\qquad
				\subfigure[The line $\ell_\infty$ at infinity.]{
					\resizebox{!}{0.25\textwidth}{
						\begin{tikzpicture}[line cap=round,line join=round,x=1.0cm,y=1.0cm]
						\clip(-0.75,-0.75) rectangle (1.75,1.75);
						\draw [color=red] (0.5,0.5) circle (1.cm);
						\draw [dash pattern=on 2pt off 2pt] (-0.20710678118654754,1.2071067811865475)-- (1.2071067811865475,-0.20710678118654746);
						\draw [dash pattern=on 2pt off 2pt] (-0.20710678118654754,-0.20710678118654754)-- (1.2071067811865475,1.2071067811865475);
						\draw [dash pattern=on 2pt off 2pt] (0.5,1.5)-- (0.5,-0.5);
						\draw [dash pattern=on 2pt off 2pt] (-0.5,0.5)-- (1.5,0.5);
						\draw [shift={(0.,0.5)},dash pattern=on 2pt off 2pt]  plot[domain=-1.1071487177940904:1.1071487177940904,variable=\t]({1.*1.118033988749895*cos(\t r)+0.*1.118033988749895*sin(\t r)},{0.*1.118033988749895*cos(\t r)+1.*1.118033988749895*sin(\t r)});
						\draw [shift={(1.,0.5)},dash pattern=on 2pt off 2pt]  plot[domain=2.0344439357957027:4.2487413713838835,variable=\t]({1.*1.118033988749895*cos(\t r)+0.*1.118033988749895*sin(\t r)},{0.*1.118033988749895*cos(\t r)+1.*1.118033988749895*sin(\t r)});
						\draw [shift={(0.5,0.)},dash pattern=on 2pt off 2pt]  plot[domain=0.4636476090008061:2.677945044588987,variable=\t]({1.*1.118033988749895*cos(\t r)+0.*1.118033988749895*sin(\t r)},{0.*1.118033988749895*cos(\t r)+1.*1.118033988749895*sin(\t r)});
						\draw [shift={(0.5,1.)},dash pattern=on 2pt off 2pt]  plot[domain=3.6052402625905993:5.81953769817878,variable=\t]({1.*1.118033988749895*cos(\t r)+0.*1.118033988749895*sin(\t r)},{0.*1.118033988749895*cos(\t r)+1.*1.118033988749895*sin(\t r)});
						\end{tikzpicture}
					}
				}
			\end{center}
			\caption{Construction of $\polyA_4$ and $\overline{\polyA_4}$.}
			\label{fig:square_arr}
		\end{figure}
	
\begin{theorem}\label{thm:polygonal_unexpected}
		Let $N>2$ be an integer. Then $\polyA_{N}$ is always supersolvable, and $\overline{\polyA_{N}}$ is supersolvable if and only if $N$ is even. Moreover, $\polyA_{N}$ never admits an unexpected curve, but if $N$ is even, then $\overline{\polyA_{N}}$ admits a unique unexpected curve and its degree is $N$.
	\end{theorem}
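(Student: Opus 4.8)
The plan is to reduce every assertion to the supersolvable criterion of Theorem~\ref{prop:supersolvable_unexpected_condition}, so that the whole proof comes down to deciding when the center of the polygon is modular and to reading off the maximal multiplicity $m$. Denote by $O$ the center of the regular $N$-gon, by $m_1,\dots,m_N$ its symmetry axes and by $e_1,\dots,e_N$ the lines supporting its sides. Since the axes are exactly the lines of the dihedral group $D_N$ through $O$, while no side-line passes through $O$ (the apothem is nonzero), the lines of $\polyA_N$ through $O$ are precisely the $N$ axes, so $m(O,\polyA_N)=N$. First I would show that $O$ is modular for $\polyA_N$: an intersection of two lines lies on an axis automatically unless it is of the form $e_i\cap e_j$, and for this I use the symmetry — any two sides of a regular $N$-gon are interchanged by a reflection whose mirror bisects their directions and is therefore one of the $m_k$, and that reflection fixes $e_i\cap e_j$, placing it on $m_k$. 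Thus $\polyA_N$ is supersolvable; running the addition–deletion computation of Lemma~\ref{lem:splitting_supersolvable} from the pencil of the $N$ axes at $O$ gives splitting type $(N-1,N)$ and hence $m=N$. As $d=2N=2m$, Theorem~\ref{prop:supersolvable_unexpected_condition} shows $\polyA_N$ admits no unexpected curve.

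The even/odd dichotomy is purely a statement about directions. With the vertices at angles $2\pi k/N$, the axes point in the directions $2\pi k/N \bmod \pi$ and the sides in the directions $\pi(2k+1)/N+\tfrac{\pi}{2}\bmod \pi$; a short computation shows that every side-direction is also an axis-direction when $N$ is even, whereas the two sets of directions are disjoint when $N$ is odd. Now $\overline{\polyA_N}=\polyA_N\cup\{\ell_\infty\}$ adds only singular points on $\ell_\infty$: the points $m_k\cap\ell_\infty$ (already on an axis) and the points $e_i\cap\ell_\infty$, which are the points at infinity in the side-directions. For $N$ even each of the latter lies on the parallel axis of equal direction, so $O$ stays modular and $\overline{\polyA_N}$ is supersolvable; the same addition–deletion at $O$ now gives splitting type $(N-1,N+1)$, so again $m=N$. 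Here $d=2N+1=2m+1>2m$, so Theorem~\ref{prop:supersolvable_unexpected_condition} yields a unique unexpected curve, of degree $m=N$.

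It remains to prove that $\overline{\polyA_N}$ is not supersolvable for $N$ odd, and the point is to exclude \emph{every} modular point without computing all the multiplicities of the arrangement. Suppose $P$ were modular. If $P\notin\ell_\infty$, then, the $2N$ side- and axis-directions being pairwise distinct for $N$ odd, $\ell_\infty$ carries $2N$ distinct singular points; joining $P$ to each of them gives $2N$ distinct lines of $\overline{\polyA_N}$ through $P$, i.e.\ all $N$ axes and all $N$ sides pass through $P$. But the axes meet only at $O$, through which no side passes, a contradiction. If $P\in\ell_\infty$, then joining $P$ to the finite point $O$ must give a line of $\overline{\polyA_N}$ through $O$, necessarily an axis $m_k$, so $P$ is the direction of $m_k$; for $N$ odd the only lines of the arrangement through $P$ are then $\ell_\infty$ and $m_k$, so every finite singular point joined to $P$ must lie on $m_k$. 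This already fails for any vertex off $m_k$ (for $N$ odd each axis carries a single vertex), a contradiction. Hence no modular point exists and $\overline{\polyA_N}$ is not supersolvable.

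I expect the real work to be the direction bookkeeping of the middle paragraph, i.e.\ the parity computation that cleanly separates the even and odd cases, together with the care needed in the last paragraph; the clever point there is that a modular point must be joined to \emph{all} singular points, which lets me derive a contradiction from the $2N$ points on $\ell_\infty$ (when $P$ is finite) or from the vertices (when $P$ is at infinity) without ever enumerating the higher concurrences of the extended sides. The remaining steps are direct applications of the supersolvable machinery already set up in Lemmas~\ref{lem:multiplicity_bound_supersolvable} and \ref{lem:splitting_supersolvable} and Theorem~\ref{prop:supersolvable_unexpected_condition}.
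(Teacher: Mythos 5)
Your proof is correct and follows essentially the same route as the paper: identify the barycenter as a modular point, read off $d$ and $m(\caA)$, and apply Theorem \ref{prop:supersolvable_unexpected_condition} together with Lemma \ref{lem:splitting_supersolvable} for the splitting types. In fact your argument is more complete than the paper's in the odd case: the paper only observes that the barycenter ceases to be modular in $\overline{\polyA_N}$, whereas your final paragraph excludes \emph{every} candidate modular point (finite or at infinity), which is what non-supersolvability actually requires.
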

	\begin{proof}
		{The line arrangements $\polyA_{N}$ is always supersolvable because all the singular points lie on a symmetry line; hence, the barycenter is a modular point. In $\overline{\polyA_N}$ we are adding the line at infinity; hence:
		\begin{enumerate}
			\item if $N$ is even, every line corresponding to an edge is ``parallel" to some symmetry line, i.e., they meet on the line at infinity; therefore, any singular point at infinity still lie on a symmetry line and the barycenter is still a modular point;
			\item if $N$ is odd, the line corresponding to the edges are not parallel to any symmetry line; therefore, the singular points obtained as intersection of the edge lines and the line at infinity are not connected to the barycenter of the polygon which is no longer a modular point.
		\end{enumerate}}
		By construction, the number of lines is $2N$ for $\polyA_{N}$ and $2N+1$ for $\overline{\polyA_{N}}$. Also, $m(\polyA_{N}) = m(\overline{\polyA_{N}}) = N$. Then, our claim follows directly from Theorem \ref{prop:supersolvable_unexpected_condition}. {Moreover, for $N$ even, we have that the splitting type of $\overline{\polyA_N}$ is $(N-1,N+1)$; therefore, by Theorem \ref{thm:unexpected_curve_condition_2} and Proposition \ref{prop:unexpected_curve_unique} there is a unique unexpected curve of degree $N$.}
	\end{proof}
	
	\begin{remark}
		Theorem \ref{thm:polygonal_unexpected} generalizes the case described in \cite[Example 4.1.10]{Har17} which corresponds, in our notation, to the configuration dual to $\overline{\polyA_4}$ for which we have an unexpected quartic. 
	\end{remark}
	
	{
	\begin{remark}
		Although $\bar{P_N}$ is not supersolvable when $N>2$ is odd, and thus Theorem \ref{prop:supersolvable_unexpected_condition} does not apply, computer experiments for low odd values of $N$ show that $\bar{P_N}$ has no unexpected curves.
	\end{remark}
	}
	
	\subsection{Tic-tac-toe arrangements}
	Here, we consider another family of line arrangements which generalizes \cite[Example 6.14]{CHMN17}.
	
	\subsection*{Construction.}
	A {\bf tic-tac-toe arrangement of type $(k,j)$}, denoted ${\tic}_{k}^{j}$, is the arrangement defined by:
	\begin{enumerate}
		\item[1.] $v_i$, $i = -k,\ldots,k$: vertical lines $x = kz$;
		\item[2.] $h_i$, $i = -k,\ldots,k$: horizontal lines $y = kz$;
		\item[3.] $d_i$, $i = -j,\ldots,j$: the diagonals $x - y + jz = 0$;
		\item[4.] $e_i$, $i = -j,\ldots,j$: the anti-diagonals $x + y + jz = 0$.
	\end{enumerate}
	\begin{remark}
		By symmetry, we may always assume that $k \geq j$. Indeed, thinking in the real projective plane, up to a $45^\circ$-rotation, we have that $\tic_k^j$ coincides with $\tic_j^k$. Moreover, we observe that the tic-tac-toe arrangement $\tic_1^0$ coincides with the square arrangement $\polyA_4$ (see Figure \ref{fig:square_arr}(b)), while, for $k > 1$, tic-tac-toe arrangements cannot be viewed as polygonal arrangements.
	\end{remark}
	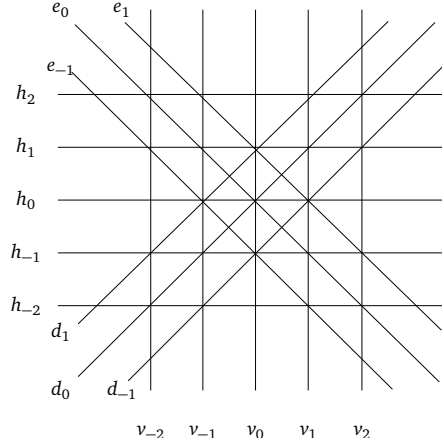
\begin{figure}[h]
		\begin{center}
			\resizebox{!}{0.35\textwidth}{
				\begin{tikzpicture}[line cap=round,line join=round,x=1.0cm,y=1.0cm]
				\clip(0.3,-4.5) rectangle (8.5,4.);
				\draw  (3.04,3.6)-- (3.04,-3.6); 
				\draw (3.04,-4.4) node {$v_{-2}$};
				\draw  (4.02,3.6)-- (4.02,-3.6);
				\draw (4.02,-4.4) node {$v_{-1}$};
				\draw (5.01,3.6)-- (5.01,-3.6);
				\draw (5.01,-4.4) node {$v_{0}$};
				\draw (6.,3.6)-- (6.,-3.6);
				\draw (6.,-4.4) node {$v_{1}$};
				\draw  (7.01,3.6)-- (7.01,-3.6);
				\draw (7.01,-4.4) node {$v_{2}$};
				\draw  (1.3,2.)-- (9.,2.);
				\draw (0.7,2.) node {$h_{2}$};
				\draw (1.3,1.)-- (9.,1.);
				\draw (0.7,1.) node {$h_{1}$};
				\draw  (1.3,0.)-- (9.,0.);
				\draw (0.7,0.) node {$h_{0}$};
				\draw  (1.3,-1.)-- (9.,-1.);
				\draw (0.7,-1.) node {$h_{-1}$};
				\draw  (1.3,-2.)-- (9.,-2.);
				\draw (0.7,-2.) node {$h_{-2}$};
				\draw (1.68,-3.34)-- (8.38,3.36);
				\draw (1.35,-3.6) node {$d_{0}$};
				\draw (1.68,-2.34)-- (7.5,3.38);
				\draw (1.35,-2.5) node {$d_{1}$};
				\draw  (2.62,-3.42)-- (8.52,2.52);
				\draw (2.5,-3.6) node {$d_{-1}$};
				\draw  (1.62,3.32)-- (8.46,-3.44);
				\draw (1.35,3.6) node {$e_{0}$};
				\draw (2.56,3.36)-- (8.5,-2.46);
				\draw (2.5,3.6) node {$e_{1}$};
				\draw (1.56,2.42)-- (7.58,-3.58);
				\draw (1.35,2.5) node {$e_{-1}$};
				\end{tikzpicture}
			}
			\caption{The tic-tac-toe arrangement of type $(2,1)$.}
		\end{center}
	\end{figure}
	Similarly as above, we denote by $\overline{\tic}_{k}^{j}$ the {\it complete} tic-tac-toe arrangement of type $(k,j)$ obtained by adding also the line at infinity. In \cite[Example 6.14]{CHMN17}, the authors observed that the splitting type of the complete tic-tac-toe arrangement of type $(k,0)$ is $(2k+1,2k+3)$ and they obtained the following.
	\begin{proposition}{\rm \cite[Proposition 6.15]{CHMN17}}
		The tic-tac-toe arrangement $\overline{\tic}_k^0$ of type $(k,0)$ admits a unique and irreducible unexpected curve of degree $2k+2$.
	\end{proposition}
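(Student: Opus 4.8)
The plan is to read off existence, degree and uniqueness from the cited characterizations, and then to concentrate the real work on irreducibility. Since $\overline{\tic}_k^0$ has $d=4k+5$ lines and the stated splitting type $(a_Z,b_Z)=(2k+1,2k+3)$, I would first apply Theorem \ref{thm:unexpected_curve_condition_2}. Condition (i) reads $2a_Z+2=4k+4<4k+5=|Z|$, so it holds (in fact $|Z|=2a_Z+3$, the extremal case). For condition (ii) I would list the multiplicities of $\overline{\tic}_k^0$: the affine grid points have multiplicity at most $4$, while the two points at infinity carrying all vertical (resp.\ horizontal) lines together with $\ell_\infty$ have multiplicity $2k+2=a_Z+1$, and all other singular points have smaller multiplicity; hence no $a_Z+2=2k+3$ of the dual points are collinear. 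Theorem \ref{thm:unexpected_curve_condition_2} then produces unexpected curves exactly in degrees $a_Z<j\le b_Z-1$, i.e.\ only $j=2k+2$, and Proposition \ref{prop:unexpected_curve_unique} gives a unique such curve $C$ in degree $a_Z+1=2k+2$.

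For irreducibility the key structural fact is that $C$ has degree $2k+2$ and a point of multiplicity $2k+1=\deg C-1$ at the general point $Q$. I would decompose $C$ into irreducible components and write $C=C_0\cup\Lambda$, where $\Lambda$ collects the components that are lines through $Q$ and $C_0$ carries none. Since degree and multiplicity at $Q$ are additive and $\operatorname{mult}_Q\le\deg$ on each component, the identity $\sum_i(\deg C_i-\operatorname{mult}_Q C_i)=\deg C-\operatorname{mult}_Q C=1$ forces every component to satisfy $\operatorname{mult}_Q=\deg$ except one; as a plane curve with $\operatorname{mult}_Q=\deg$ is a union of lines through $Q$, which $C_0$ by construction avoids, $C_0$ is irreducible. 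Writing $r=\#\Lambda$, so $\deg C_0=2k+2-r$, I note that because $Q$ is general no line through $Q$ contains two points of $Z$, hence $\Lambda$ meets at most $r$ of the points and $C_0$ passes through at least $|Z|-r=4k+5-r$ of them. Let $M=\{y_1=0\}$ and $M'=\{y_0=0\}$ be the two lines carrying $2k+2$ points of $Z$ each, sharing the point $O=(0{:}0{:}1)$ dual to $\ell_\infty$. Since $C_0$ is irreducible of degree $<2k+2$, neither $M$ nor $M'$ is a component, so B\'ezout gives $|C_0\cap M|,|C_0\cap M'|\le 2k+2-r$; partitioning $Z=(Z\cap M)\sqcup(Z\cap M'\setminus M)\sqcup\{(1{:}1{:}0),(1{:}{-}1{:}0)\}$ yields $4k+5-r\le|C_0\cap Z|\le 2(2k+2-r)+2=4k+6-2r$, hence $r\le 1$.

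It remains to exclude $r=1$. In that case equality holds throughout, and I would observe that $O\notin C_0$: otherwise $C_0$ would meet $M'$ in the $2k+1$ points $B_j=(0{:}1{:}{-}j)$ together with $O$, i.e.\ in $2k+2>\deg C_0$ points, forcing $M'\subseteq C_0$, which is impossible. Thus the single point covered by $\Lambda$ is $O$, and $C_0$ is a curve of degree $2k+1$ with a point of multiplicity $2k$ at the general point $Q$ passing through $W:=Z\setminus\{O\}$, i.e.\ $C_0\in[I(W+2kQ)]_{2k+1}$. Consequently $C$ is irreducible once I show $[I(W+2kQ)]_{2k+1}=0$ for general $Q$. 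By \eqref{eq: m = a} this is equivalent to $m_W\ge 2k+1$, i.e.\ to the statement that the deletion $\caA_W=\tic_k^0$ (the dual arrangement of $W$) has generic splitting type $(2k+1,2k+2)$.

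This last point is the main obstacle, and it is genuinely hard: for $k\ge 2$ the arrangement $\tic_k^0$ is \emph{not} free, because the concurrency of the $2k+1$ horizontal lines at infinity makes the numerical hypotheses of the Addition–Deletion Theorem \ref{thm:addition-deletion} fail as soon as one tries to adjoin a second horizontal line, so its splitting type cannot be obtained from Lemma \ref{lem:splitting_supersolvable}. I would therefore compute the generic (non-free) splitting type of $\tic_k^0$ directly, for instance by bounding from below by $2k+1$ the minimal degree of a Jacobian syzygy of its defining polynomial, or equivalently by verifying that the $4k+4$ interpolation conditions imposed by $W$ on the $(4k+3)$-dimensional linear system of degree-$(2k+1)$ forms with multiplicity $2k$ at a general $Q$ have full rank. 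The base case $k=1$ is reassuring: there $\tic_1^0=\polyA_4$ is supersolvable, so Lemma \ref{lem:splitting_supersolvable} gives splitting type $(3,4)=(2k+1,2k+2)$, and the unexpected curve is the classical irreducible quartic of Example \ref{example:Brian}.
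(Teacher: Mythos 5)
First, a point of orientation: the paper does not prove this statement at all --- it is quoted verbatim from \cite[Proposition 6.15]{CHMN17}, so there is no in-paper proof to compare yours against. Your first paragraph (existence, the degree $2k+2$, and uniqueness) is correct and is exactly what the surrounding framework yields: $|Z|=4k+5$, splitting type $(2k+1,2k+3)$ (the arrangement is in fact supersolvable with modular point of multiplicity $2k+2$ at infinity, so Lemma \ref{lem:splitting_supersolvable} recovers this), maximal multiplicity $2k+2=a_Z+1$, hence Theorem \ref{thm:unexpected_curve_condition_2} and Proposition \ref{prop:unexpected_curve_unique} give a unique unexpected curve, necessarily of degree $2k+2$.

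The substance of the proposition beyond the quoted results is irreducibility, and there your argument is not complete. The decomposition $C=C_0+\Lambda$ and the B\'ezout count against the two lines $M,M'$ carrying $2k+2$ dual points each are sound, modulo a small patchable point: the case $C_0\in\{M,M'\}$ has to be excluded by the point count itself (a line plus $2k+1$ lines through $Q$ covers at most $4k+3<4k+5$ points of $Z$), not by ``$\deg C_0<2k+2$'', since the bound $|C_0\cap M|\le \deg C_0$ is vacuous when $M\subseteq C_0$. The real problem is that the exclusion of $r=1$ funnels entirely into the single assertion $[I(W+2kQ)]_{2k+1}=0$ for $W=Z\setminus\{O\}$, equivalently (by \eqref{eq: m = a}) that $\tic_k^0$ has balanced generic splitting type $(2k+1,2k+2)$ --- and this you do not prove; you only describe what you ``would'' do. This is not a routine verification: it is a splitting-type computation of exactly the kind that constitutes the content of the proposition, Lemma \ref{lem:splitting_supersolvable} does not apply because $\tic_k^0$ is not supersolvable for $k\ge 2$, the inclusion $W\subset Z$ only gives the wrong-direction bound $m_W\le m_Z=2k+1$, and rerunning your own B\'ezout argument on a putative element of $[I(W+2kQ)]_{2k+1}$ shows such an element would have to be irreducible but does not rule it out. (Incidentally, your parenthetical claim that $\tic_k^0$ is not free ``because the hypotheses of Theorem \ref{thm:addition-deletion} fail'' is not a valid deduction, since Addition--Deletion is a sufficient criterion rather than a necessary one; nothing rests on that remark, but it should not be stated as established.) So what you have is a reasonable reduction with the hardest step left open.
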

	
	We may observe that $\overline{\tic}_k^0$ is supersolvable and, in particular, free. By the Addition-Deletion Theorem, we can compute the splitting type of $\overline{\tic}_k^1$ which, in particular, remains free. Therefore, we may inductively use the Addition-Deletion Theorem to compute the splitting type of $\overline{\tic}_k^j$, as we show in the following.
	\begin{lemma}\label{lemma:tictactoe_splitting}
		Let $k,j$ be positive integers with $k \geq j$. Then, the tic-tac-toe arrangement $\overline{\tic}_k^j$ is free with splitting type equal to $(2k+1+2j,2k+3+2j)$.
	\end{lemma}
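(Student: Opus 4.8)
The plan is to prove the statement by induction on $j$, using the Addition--Deletion Theorem (Theorem \ref{thm:addition-deletion}) to pass from $\overline{\tic}_k^{j-1}$ to $\overline{\tic}_k^{j}$. The base case $j=0$ is exactly the computation recalled above from \cite[Example 6.14]{CHMN17}: since $\overline{\tic}_k^0$ is supersolvable, it is free, with splitting type $(2k+1,2k+3)$. For the inductive step I would assume $\overline{\tic}_k^{j-1}$ free with splitting type $(2k+2j-1,\,2k+2j+1)$ and add, one at a time and in the order
\[
	d_j,\quad d_{-j},\quad e_j,\quad e_{-j},
\]
the four lines by which $\overline{\tic}_k^{j}$ differs from $\overline{\tic}_k^{j-1}$, applying Theorem \ref{thm:addition-deletion} at each step. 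Recall the precise bookkeeping in that theorem: writing the current splitting type as $(a,b)$ with $a\le b$, the addition raises the first exponent, giving $(a+1,b)$, precisely when $|\Sing(\caA)\cap\ell|=b+1$, and raises the second, giving $(a,b+1)$, precisely when $|\Sing(\caA)\cap\ell|=a+1$.

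The heart of the argument is the claim that, at each of these four additions, the newly added line $\ell$ meets the current arrangement in exactly $|\Sing(\caA)\cap\ell| = 2k+2j+2$ distinct singular points. I would establish this by a direct count, parametrising $\ell$ by one affine coordinate. For a diagonal such as $d_j\colon y=x+j$ the intersections with the $2k+1$ verticals and the $2k+1$ horizontals occur at integer abscissae that fill a single block $\{-k-j,\ldots,k\}$ of $2k+j+1$ consecutive values, the overlap being exactly the grid points through which $d_j$ passes; the intersections with the transverse anti-diagonal family contribute $j$ further points with half-integer abscissae, which are genuinely new; and finally the mutually parallel diagonals together with $\ell_\infty$ contribute the single point $(1:1:0)$ at infinity. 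This totals $(2k+j+1)+j+1 = 2k+2j+2$, and the analogous count holds for $d_{-j}, e_j, e_{-j}$ by the dihedral symmetry of the construction. Crucially, this count is independent of the order of addition: the pairwise intersections of the four new lines all fall either on pre-existing grid points (for instance $d_j\cap e_j=(-j,0)$, which already lies on $v_{-j}$, using $k\ge j$) or at one of the points $(1:\pm1:0)$ at infinity, so that adding some of the four lines earlier never produces a new intersection point on a line added later.

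Granting this count, the inductive step becomes a bookkeeping of exponents. Since every one of the four additions contributes $2k+2j+2$ points, the splitting type evolves as
\begin{align*}
	(2k+2j-1,\,2k+2j+1) &\ \to\ (2k+2j,\,2k+2j+1)\ \to\ (2k+2j+1,\,2k+2j+1) \\
	&\ \to\ (2k+2j+1,\,2k+2j+2)\ \to\ (2k+2j+1,\,2k+2j+3),
\end{align*}
where the first two steps use $2k+2j+2=b+1$ (raising the smaller exponent), the third uses $2k+2j+2=a+1=b+1$ with the two exponents now equal, and the last uses $2k+2j+2=a+1$ (raising the larger exponent). Freeness is preserved at every step, and the final pair $(2k+1+2j,\,2k+3+2j)$ is exactly the asserted splitting type, completing the induction.

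I expect the main obstacle to be the point count itself. One must check carefully that no two of the claimed intersection points coincide---separating the integer grid abscissae from the new half-integer ones, and collapsing each entire parallel family to a single point at infinity---and that the hypothesis $k\ge j$ guarantees that every pairwise intersection of the four new lines already lies on a line of $\overline{\tic}_k^{j-1}$, so that the value $2k+2j+2$ is genuinely invariant under the chosen order of addition. Once this invariance is secured, the delicate part of the Addition--Deletion bookkeeping, namely keeping track of which exponent is raised (and in particular handling the step where the two exponents are equal), is routine.
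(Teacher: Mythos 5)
Your proof is correct and follows essentially the same route as the paper: induction on $j$ with four applications of the Addition--Deletion Theorem, hinging on the count of $2k+2j+2$ singular points on each newly added (anti-)diagonal, which agrees, after the index shift from $\overline{\tic}_k^{j-1}\to\overline{\tic}_k^{j}$ to $\overline{\tic}_k^{j}\to\overline{\tic}_k^{j+1}$, with the paper's inclusion--exclusion count $c_1-c_2+c_3=2k+2j+4$. Your explicit check that the pairwise intersections of the four new lines land on pre-existing singular points (so the count is order-independent) is the same observation the paper makes via $d_{-j-1}\cap d_{j+1}=d_{-j-1}\cap d_0$ and $e_\alpha\cap d_\beta=v_{\alpha'}\cap h_{\beta'}$.
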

	\begin{proof}
		For any $k$ and $j = 0$, we know that the claim holds by \cite[Example 6.14]{CHMN17}. We proceed now by induction on $j$. Assume that $\overline{\tic}_k^j$ is free and has splitting type $(2k+1+2j,2k+3+2j)$. We want to add the lines $d_{j+1},d_{-j-1},e_{j+1},e_{-j-1}$ and use the Addition-Deletion Theorem four times to prove the claim for $\overline{\tic}_k^{j+1}$. First, we need to compute the intersection between the diagonal $d_{j+1}$ and $\overline{\tic}_k^j$. This is:
		\begin{equation}\label{eq:counting_inter}
		|\overline{\tic}_k^j \cap d_{j+1}| = c_1 - c_2 + c_3 = [2(2k+1) + 1] - [ 2k - j] + [j+1] = 2k+2j+4;
		\end{equation}
		where:
		\begin{enumerate}
			\item[i.] $c_1$: the number of vertical and horizontal lines in $\overline{\tic}_k^j$ plus the line at infinity;
			\item[ii.] $c_2$: the number of points of the type $v_\alpha \cap h_\beta$ lying on $d_{j+1}$, i.e., the number of points of intersection in $\overline{\tic}_k^j \cap d_{j+1}$ that are counted twice by $c_1$;
			\item[iii.] $c_3$: then number of diagonals $e_i$'s intersecting $d_{j+1}$ in points not of type $v_\alpha \cap h_\beta$, i.e., the remaining points in $\overline{\tic}_k^j \cap d_{j+1}$ not yet counted by $c_1$.
		\end{enumerate}
		Then, by theAddition-Deletion Theorem, $\tic' = \overline{\tic}_k^j \cup \{d_{j+1}\}$ is free and has splitting type $(2k+2+2j,2k+3+2j)$. Now, since $d_{-j-1}\cap d_{j+1} = d_{-j-1}\cap d_0$, we have that the cardinality of the intersection $\tic' \cap d_{-j-1}$ is the same as counted in \eqref{eq:counting_inter}. Therefore, by the Addition-Deletion Theorem, $\tic'' = \tic' \cup \{d_{-j-1}\}$ is free and has splitting type $(2k+3+2j,2k+3+2j)$. Similarly, since for any $\alpha,\beta$, $e_\alpha \cap d_\beta = v_{\alpha'} \cap h_{\beta'}$, for some $\alpha',\beta'$, we have that also the intersections $\tic'' \cap \{e_{j+1}\}$ and $(\tic'' \cup \{e_{j+1}\}) \cap \{e_{-j-1}\}$ have the same cardinality as counted in \eqref{eq:counting_inter}. Again, by the Addition-Deletion Theorem, we have that the line arrangement
		$$
		\overline{\tic}_k^j \cup \{d_{j+1},d_{-j-1},e_{j+1},e_{-j-1}\} = \overline{\tic}_k^{j+1},
		$$
		is free and has splitting type $(2k+3+2j,2k+5+2j) = (2k+1+2(j+1),2k+3+2(j+1))$.
	\end{proof}
	\begin{theorem}
		The complete arrangement $\overline{\tic}^j_k$ {admits a unique} unexpected curve of degree $2(k+j+1)$.
	\end{theorem}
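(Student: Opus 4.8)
The plan is to read off the splitting type from Lemma~\ref{lemma:tictactoe_splitting} and feed it into the existence criterion of Theorem~\ref{thm:unexpected_curve_condition_2}, so that the whole argument reduces to two numerical checks and one combinatorial check. By the symmetry $\tic_k^j=\tic_j^k$ noted earlier we may assume $k\ge j$, and the claimed degree $2(k+j+1)$ is symmetric in $k,j$ in any case. By Lemma~\ref{lemma:tictactoe_splitting} the dual splitting type is $(a_Z,b_Z)=(2k+1+2j,\,2k+3+2j)$ with $a_Z\le b_Z$. First I would count the lines of $\overline{\tic}_k^j$: there are $2k+1$ vertical, $2k+1$ horizontal, $2j+1$ diagonal and $2j+1$ anti-diagonal lines together with $\ell_\infty$, so $|Z|=4k+4j+5$.

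For condition (i) of Theorem~\ref{thm:unexpected_curve_condition_2} one simply notes that $2a_Z+2=4k+4j+4<4k+4j+5=|Z|$, which always holds. Condition (ii) asks that no $a_Z+2=2k+2j+3$ of the dual points be collinear, equivalently that every singular point of $\overline{\tic}_k^j$ has multiplicity at most $a_Z+1=2k+2j+2$. Here I would classify the singular points: any affine intersection point lies on at most one line of each of the four families and not on $\ell_\infty$, hence has multiplicity at most $4$; at infinity the verticals meet at $(0:1:0)$ and the horizontals at $(1:0:0)$, each joined by $\ell_\infty$ with multiplicity $2k+2$, while the diagonals and anti-diagonals meet at $(1:1:0)$ and $(1:-1:0)$ with multiplicity $2j+2$. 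Since $k\ge j$, the maximal multiplicity is $2k+2$, and $2k+2\le 2k+2j+2$, so condition (ii) holds.

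With both conditions satisfied, Theorem~\ref{thm:unexpected_curve_condition_2} guarantees that $Z$ admits unexpected curves precisely in degrees $d$ with $a_Z<d\le b_Z-1$. Because $b_Z=a_Z+2$, this range collapses to the single value $d=a_Z+1=2(k+j+1)$, which yields the claimed degree. Uniqueness is then immediate from Proposition~\ref{prop:unexpected_curve_unique}: since $a_Z+1=b_Z-1$ in our situation, the asserted curve is exactly the unique unexpected curve that the proposition produces in degree $a_Z+1$.

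I expect the only delicate point to be the multiplicity bound in condition (ii); everything else is bookkeeping. That check is nonetheless routine once one observes that the only high-multiplicity singularities are the four pencils at infinity, and that the hypothesis $k\ge j$ makes the verticals and horizontals the dominant pencils, so the maximal multiplicity $2k+2$ stays safely below $a_Z+1$ for every $j\ge 0$.
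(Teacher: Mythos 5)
Your argument is correct and follows essentially the same route as the paper: read the splitting type off Lemma~\ref{lemma:tictactoe_splitting}, verify the two conditions of Theorem~\ref{thm:unexpected_curve_condition_2} via a bound on the multiplicities of the singular points, and get uniqueness from Proposition~\ref{prop:unexpected_curve_unique}; you merely spell out the bookkeeping that the paper leaves implicit. Your multiplicity count at infinity ($2k+2$ at the common point of the verticals and $\ell_\infty$) is in fact slightly more careful than the paper's stated $m(\overline{\tic}_k^j)=2k+1$, and the required bound holds either way.
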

	\begin{proof}
		Observe that $m(\overline{\tic}_k^j)=2k+1$, that is the multiplicity of one of the points at infinity, e.g. the direction of the vertical lines. Hence in the dual configuration there are no more than $2k+1$ collinear points. {Then, we can use Lemma \ref{lemma:tictactoe_splitting} and conclude by Theorem \ref{thm:unexpected_curve_condition_2} and Proposition \ref{prop:unexpected_curve_unique}.}
	\end{proof}
%

	\section{Other examples}\label{appendix:further examples}
	In this section, we exhibit other examples of unexpected curves arising from special line arrangements. 
	
	{
	\subsection{Adding lines to polygonal arrangements}
	First, we construct them by using ideas from the previous sections. We may notice that tic-tac-toe arrangements are constructed from the square arrangement ${\polyA}_4$ by adding lines parallel to the ones of ${\polyA}_4$. Hence, we try to proceed in a similar way by starting from polygonal arrangements $\polyA_N$, with $N$ even. Unfortunately, this procedure is not successful in the sense that we can use Addition-Deletion Theorem only in a very few cases, as we are going to explain, but in general we do not know how to efficiently compute the splitting type of these line arrangements, since they are not supersolvable (hence, we cannot use Lemma \ref{lem:splitting_supersolvable}) and we cannot apply Addition-Deletion Theorem.}
	
\begin{example}
		\label{ex:A(13,1) another ex}
		Consider the set of lines from the arrangement $\overline{\polyA_{6}}$ and take the points $P_1,\ldots, P_6$ as in Figure \ref{fig:A(13,1)andPoints}.
		From the proof of Theorem \ref{thm:polygonal_unexpected} we know that the splitting type for $\overline{\polyA_{6}}$ is $(5,7)$. Now, we construct a series of examples for which some of the dual configuration of the points give an unexpected curve. 
\begin{figure}[htb]
\begin{center}
					\begin{tikzpicture}[line cap=round,line join=round,>=triangle 45,x=1.0cm,y=1.0cm, scale=3]
				\clip(-0.83,-0.68) rectangle (0.88,0.69);
				\draw [domain=-0.83:0.88] plot(\x,{(-1-0*\x)/4});
				\draw [domain=-0.83:0.88] plot(\x,{(-1--3*\x)/2});
				\draw [domain=-0.83:0.88] plot(\x,{(-1--3*\x)/-2});
				\draw [domain=-0.83:0.88] plot(\x,{(-1-0*\x)/-4});
				\draw [domain=-0.83:0.88] plot(\x,{(-1-3*\x)/-2});
				\draw [domain=-0.83:0.88] plot(\x,{(-1-3*\x)/2});
				\draw [domain=-0.83:0.88] plot(\x,{(-0--3*\x)/2});
				\draw [domain=-0.83:0.88] plot(\x,{(-0--1*\x)/2});
				\draw [domain=-0.83:0.88] plot(\x,{(-0-0*\x)/1});
				\draw [domain=-0.83:0.88] plot(\x,{(-0-1*\x)/2});
				\draw [domain=-0.83:0.88] plot(\x,{(-0-3*\x)/2});
				\draw (0,-0.68) -- (0,0.69);
				\begin{scriptsize}
				\fill [color=black] (-0.5,-0.25) circle (0.5pt);
				\draw[color=black] (-0.42,-0.31) node {$P_1$};
				\fill [color=black] (-0.5,0.25) circle (0.5pt);
				\draw[color=black] (-0.4,0.36) node {$P_2$};
				\fill [color=black] (0,0.5) circle (0.5pt);
				\draw[color=black] (0.08,0.53) node {$P_3$};
				\fill [color=black] (0.5,0.25) circle (0.5pt);
				\draw[color=black] (0.49,0.33) node {$P_4$};
				\fill [color=black] (0.5,-0.25) circle (0.5pt);
				\draw[color=black] (0.57,-0.19) node {$P_5$};
				\fill [color=black] (0,-0.5) circle (0.5pt);
				\draw[color=black] (0.11,-0.5) node {$P_6$};
				\end{scriptsize}
				\end{tikzpicture}
	\caption{Configuration $\overline{\polyA_{6}}$. The line at infinity is not shown.}
	\label{fig:A(13,1)andPoints}
\end{center}
\end{figure}
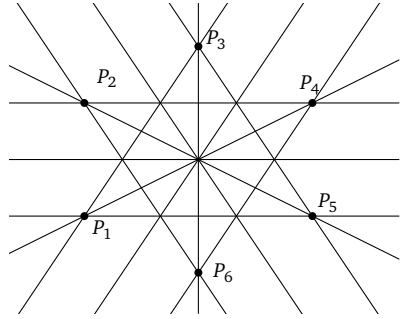

			We add, step by step, the lines $\ell_1 := \overline{P_1P_2}, \ell_2:=\overline{P_2P_3},\ldots,\ell_6:=\overline{P_6P_1}$ (blue dotted lines in Figure \ref{fig:A(13,1)andLines}). Denote $\caB_0 := \overline{\polyA_6}$ and $\caB_i := \caB_{i-1}\cup\{\ell_i\}$. By Theorem \ref{thm:addition-deletion}, since $$\Big|\bigcup_{\ell \in \caB_{i-1}} \ell \cap \ell_{i}\Big| = 8, \text{ for } i = 1,\ldots,6,$$ the splitting types of the arrangements $\caB_i$'s are
			
$$
		\begin{matrix}
	\overline{\polyA_6}  \\ (5,7)
	\end{matrix} ~~\longrightarrow~~
		\begin{matrix}
	\caB_1 = \overline{\polyA_6} \cup \{\ell_1\}  \\ (6,7)
	\end{matrix} ~~\longrightarrow~~
		\begin{matrix}
	\caB_2 = \overline{\polyA_6} \cup\{\ell_1,\ell_2\}  \\ (7,7)
	\end{matrix} ~~\longrightarrow~~
		\ldots~~ \longrightarrow~~
		\begin{matrix}
	\caB_6 = \overline{\polyA_6} \cup\{\ell_1,\ldots,\ell_6\}  \\ (11,7)
	\end{matrix} 
$$
By Theorem \ref{thm:unexpected_curve_condition_2}, we have that the line arrangements $\caB_4,\caB_5$ and $\caB_6$ admit unexpected curves of degrees $8,9$ and $10$. We continue by adding lines passing through the points $P_1,\ldots,P_6$, as indicated in Figure \ref{fig:A(13,1)andLines} (dashed-dotted red lines). Denote by $\ell_i'$ the new line passing through $P_i$, respectively, and denote the line arrangements $\caB_0' := \caB_6$ and $\caB'_i := \caB'_{i-1}\cup \{\ell'_i\}$. Since 
$$\Big|\bigcup_{\ell\in\caB'_{i-1}}\ell \cap \ell'_i\Big| = 12\text{, for i = 1,\ldots,6},$$ by Theorem \ref{thm:addition-deletion}, the splitting types of the arrangements $\caB'_i$'s are
	$$
		\begin{matrix}
	\caB_6  \\ (11,7)
	\end{matrix} ~~\longrightarrow~~
		\begin{matrix}
	\caB'_1 = \caB_6 \cup \{\ell'_1\}  \\ (11,8)
	\end{matrix} ~~\longrightarrow~~
		\begin{matrix}
	\caB'_2 = \caB_6 \cup\{\ell'_1,\ell'_2\}  \\ (11,9)
	\end{matrix} ~~\longrightarrow~~
		\ldots~~ \longrightarrow~~
		\begin{matrix}
	\caB'_6 = \caB_6 \cup\{\ell'_1,\ldots,\ell'_6\}  \\ (11,13)
	\end{matrix} 
$$
			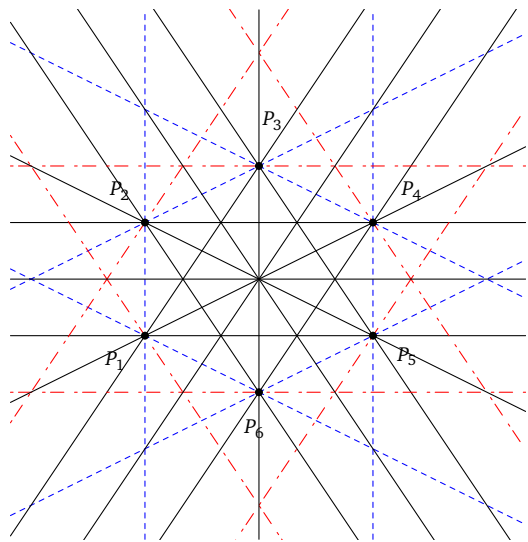
\begin{figure}[H]
				\centering
				\begin{tikzpicture}[line cap=round,line join=round,>=triangle 45,x=1.0cm,y=1.0cm, scale=3]
				\clip(-1.09,-1.15) rectangle (1.17,1.19);
				\draw [dash pattern=on 2pt off 2pt, color = blue] (-0.5,-1.15) -- (-0.5,1.19);
				\draw [dash pattern=on 2pt off 2pt,domain=-1.09:1.17, color = blue] plot(\x,{(-1-1*\x)/2});
				\draw [dash pattern=on 2pt off 2pt,domain=-1.09:1.17, color = blue] plot(\x,{(-1--1*\x)/2});
				\draw [dash pattern=on 2pt off 2pt, color = blue] (0.5,-1.15) -- (0.5,1.19);
				\draw [dash pattern=on 2pt off 2pt,domain=-1.09:1.17, color = blue] plot(\x,{(-1--1*\x)/-2});
				\draw [dash pattern=on 2pt off 2pt,domain=-1.09:1.17, color = blue] plot(\x,{(-1-1*\x)/-2});
				\draw [domain=-1.09:1.17] plot(\x,{(-1-0*\x)/4});
				\draw [domain=-1.09:1.17] plot(\x,{(-1--3*\x)/2});
				\draw [domain=-1.09:1.17] plot(\x,{(-1--3*\x)/-2});
				\draw [domain=-1.09:1.17] plot(\x,{(-1-0*\x)/-4});
				\draw [domain=-1.09:1.17] plot(\x,{(-1-3*\x)/-2});
				\draw [domain=-1.09:1.17] plot(\x,{(-1-3*\x)/2});
				\draw [dash pattern=on 1pt off 2pt on 5pt off 4pt,color=black,domain=-1.09:1.17, color = red] plot(\x,{(-1-0*\x)/2});
				\draw [dash pattern=on 1pt off 2pt on 5pt off 4pt, color=black,domain=-1.09:1.17, color = red] plot(\x,{(-1--1.5*\x)/1});
				\draw [dash pattern=on 1pt off 2pt on 5pt off 4pt, color=black,domain=-1.09:1.17, color = red] plot(\x,{(-1--1.5*\x)/-1});
				\draw [dash pattern=on 1pt off 2pt on 5pt off 4pt, color=black,domain=-1.09:1.17, color = red] plot(\x,{(-1-0*\x)/-2});
				\draw [dash pattern=on 1pt off 2pt on 5pt off 4pt, color=black,domain=-1.09:1.17, color = red] plot(\x,{(-1-1.5*\x)/-1});
				\draw [dash pattern=on 1pt off 2pt on 5pt off 4pt, color=black,domain=-1.09:1.17, color = red] plot(\x,{(-1-1.5*\x)/1});
				\draw [domain=-1.09:1.17] plot(\x,{(-0--3*\x)/2});
				\draw [domain=-1.09:1.17] plot(\x,{(-0--1*\x)/2});
				\draw [domain=-1.09:1.17] plot(\x,{(-0-0*\x)/1});
				\draw [domain=-1.09:1.17] plot(\x,{(-0-1*\x)/2});
				\draw [domain=-1.09:1.17] plot(\x,{(-0-3*\x)/2});
				\draw (0,-1.15) -- (0,1.19);
				\begin{scriptsize}
				\fill [color=black] (-0.5,-0.25) circle (0.5pt);
				\draw[color=black] (-0.63,-0.36) node {$P_1$};
				\fill [color=black] (-0.5,0.25) circle (0.5pt);
				\draw[color=black] (-0.61,0.39) node {$P_2$};
				\fill [color=black] (0,0.5) circle (0.5pt);
				\draw[color=black] (0.06,0.7) node {$P_3$};
				\fill [color=black] (0.5,0.25) circle (0.5pt);
				\draw[color=black] (0.67,0.39) node {$P_4$};
				\fill [color=black] (0.5,-0.25) circle (0.5pt);
				\draw[color=black] (0.65,-0.34) node {$P_5$};
				\fill [color=black] (0,-0.5) circle (0.5pt);
				\draw[color=black] (-0.02,-0.66) node {$P_6$};
				\end{scriptsize}
				\end{tikzpicture}
				\caption{Dashed blue lines $\ell_1,\ldots,\ell_6$ added in the first step; dash-dotted red lines $\ell'_1,\ldots,\ell'_6$ in the second.}
				\label{fig:A(13,1)andLines}
			\end{figure}
			We obtain three new arrangements $\caB'_1,\caB'_2$ and $\caB'_6$ which admit unexpected curves of degree $9,10$ and $12$. Moreover, we may check that the line arrangement $\caB'_6$ constructed in Figure \ref{fig:A(13,1)andLines} is dual to the configuration of points given by $\Sing(\overline{\polyA_{6}})$. This procedure of adding lines can be repeated two more times. As indicated in Figure \ref{fig:38linesFromA(13,1)}, we first add the $6$ blue dashed lines, $m_1,\ldots,m_6$, and then $6$ red dash-dotted lines $m'_1,\ldots,m'_6$.
			
			In this process, by Theorem \ref{thm:addition-deletion}, we get the following series of exponents
$$
		\begin{matrix}
	\caB'_6  \\ (11,13)
	\end{matrix} ~~\longrightarrow~~
		\begin{matrix}
	\caB'_6 \cup \{m_1\}  \\ (12,13)
	\end{matrix} ~~\longrightarrow~~
		\begin{matrix}
	\caB'_6 \cup\{m_1,m_2\}  \\ (13,13)
	\end{matrix} ~~\longrightarrow~~
		\ldots~~ \longrightarrow~~
		\begin{matrix}
	\caB''_6 := \caB'_6 \cup\{m_1,\ldots,m_6\}  \\ (17,13)
	\end{matrix} 
$$
$$
		\begin{matrix}
	\caB'_6  \\ (17,13)
	\end{matrix} ~~\longrightarrow~~
		\begin{matrix}
	\caB''_6 \cup \{m'_1\}  \\ (17,14)
	\end{matrix} ~~\longrightarrow~~
		\begin{matrix}
	\caB''_6 \cup\{m'_1,m'_2\}  \\ (17,15)
	\end{matrix} ~~\longrightarrow~~
		\ldots~~ \longrightarrow~~
		\begin{matrix}
	\caB''_6 \cup\{m'_1,\ldots,m'_6\}  \\ (17,19)
	\end{matrix} 
$$
from which, by Theorem \ref{thm:unexpected_curve_condition_2}, we find new examples of unexpected curves.
			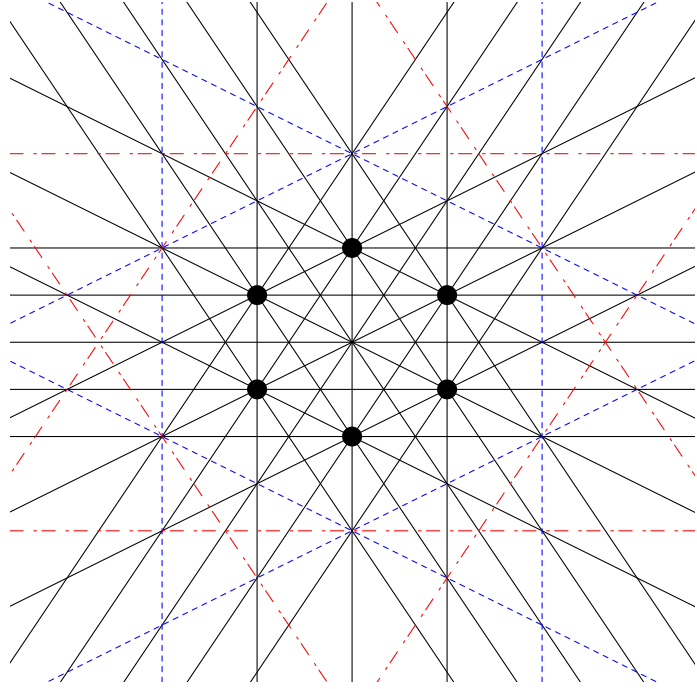
\begin{figure}[H]
			\centering
			\begin{tikzpicture}[line cap=round,line join=round,>=triangle 45,x=1.0cm,y=1.0cm, scale=2.5]
			\clip(-1.8,-1.8) rectangle (1.8,1.8);
			\draw (-0.5,-2.31) -- (-0.5,2.31);
			\draw [domain=-2.35:2.36] plot(\x,{(-1-1*\x)/2});
			\draw [domain=-2.35:2.36] plot(\x,{(-1--1*\x)/2});
			\draw (0.5,-2.31) -- (0.5,2.31);
			\draw [domain=-2.35:2.36] plot(\x,{(-1--1*\x)/-2});
			\draw [domain=-2.35:2.36] plot(\x,{(-1-1*\x)/-2});
			\draw [domain=-2.35:2.36] plot(\x,{(-1-0*\x)/4});
			\draw [domain=-2.35:2.36] plot(\x,{(-1--3*\x)/2});
			\draw [domain=-2.35:2.36] plot(\x,{(-1--3*\x)/-2});
			\draw [domain=-2.35:2.36] plot(\x,{(-1-0*\x)/-4});
			\draw [domain=-2.35:2.36] plot(\x,{(-1-3*\x)/-2});
			\draw [domain=-2.35:2.36] plot(\x,{(-1-3*\x)/2});
			\draw [domain=-2.35:2.36] plot(\x,{(-1-0*\x)/2});
			\draw [domain=-2.35:2.36] plot(\x,{(-1--1.5*\x)/1});
			\draw [domain=-2.35:2.36] plot(\x,{(-1--1.5*\x)/-1});
			\draw [domain=-2.35:2.36] plot(\x,{(-1-0*\x)/-2});
			\draw [domain=-2.35:2.36] plot(\x,{(-1-1.5*\x)/-1});
			\draw [domain=-2.35:2.36] plot(\x,{(-1-1.5*\x)/1});
			\draw [domain=-2.35:2.36] plot(\x,{(-0--3*\x)/2});
			\draw [domain=-2.35:2.36] plot(\x,{(-0--1*\x)/2});
			\draw [domain=-2.35:2.36] plot(\x,{(-0-0*\x)/1});
			\draw [domain=-2.35:2.36] plot(\x,{(-0-1*\x)/2});
			\draw [domain=-2.35:2.36] plot(\x,{(-0-3*\x)/2});
			\draw [color=black] (0,-2.31) -- (0,2.31);
			\draw [dash pattern=on 2pt off 2pt, color=blue] (-1,-2.31) -- (-1,2.31);
			\draw [dash pattern=on 2pt off 2pt, color=blue,domain=-2.35:2.36] plot(\x,{(--0.5--0.25*\x)/0.5});
			\draw [dash pattern=on 2pt off 2pt, color=blue,domain=-2.35:2.36] plot(\x,{(--1-0.5*\x)/1});
			\draw [dash pattern=on 2pt off 2pt, color=blue] (1,-2.31) -- (1,2.31);
			\draw [dash pattern=on 2pt off 2pt, color=blue,domain=-2.35:2.36] plot(\x,{(--0.5-0.25*\x)/-0.5});
			\draw [dash pattern=on 2pt off 2pt, color=blue,domain=-2.35:2.36] plot(\x,{(--1--0.5*\x)/-1});
			\draw [dash pattern=on 1pt off 2pt on 5pt off 4pt, color=red,domain=-2.35:2.36] plot(\x,{(-6-0*\x)/-6});
			\draw [dash pattern=on 1pt off 2pt on 5pt off 4pt, color=red,domain=-2.35:2.36] plot(\x,{(-6--4.5*\x)/-3});
			\draw [dash pattern=on 1pt off 2pt on 5pt off 4pt, color=red,domain=-2.35:2.36] plot(\x,{(-6--4.5*\x)/3});
			\draw [dash pattern=on 1pt off 2pt on 5pt off 4pt, color=red,domain=-2.35:2.36] plot(\x,{(-2-0*\x)/2});
			\draw [dash pattern=on 1pt off 2pt on 5pt off 4pt, color=red,domain=-2.35:2.36] plot(\x,{(-6-4.5*\x)/3});
			\draw [dash pattern=on 1pt off 2pt on 5pt off 4pt, color=red,domain=-2.35:2.36] plot(\x,{(-6-4.5*\x)/-3});
			\begin{scriptsize}
			\fill [color=black] (-0.5,-0.25) circle (1.5pt);
			\fill [color=black] (-0.5,0.25) circle (1.5pt);
			\fill [color=black] (0,0.5) circle (1.5pt);
			\fill [color=black] (0.5,0.25) circle (1.5pt);
			\fill [color=black] (0.5,-0.25) circle (1.5pt);
			\fill [color=black] (0,-0.5) circle (1.5pt);
			\end{scriptsize}
			\end{tikzpicture}
			\caption{Bolded points indicate the original points $P_1,\ldots,P_6$; blue dotted lines $m_1,\ldots,m_6$ are added in the first step; red dash-dotted lines $m'_1,\ldots,m'_6$ are added in the second step.}
			\label{fig:38linesFromA(13,1)}
		\end{figure}
	\end{example}
	
	\begin{example}
		\label{ex:octagon}
		We can proceed in a similar way as in Example \ref{ex:A(13,1) another ex}, but starting from configuration $\overline{\polyA_{8}}$. As before, we denote by $P_1,P_2,\ldots,P_8$ vertices of the octagon as indicated in Figure \ref{fig:A(17,1)}.
		\begin{figure}[H]
			\centering
			\includegraphics[scale=0.4]{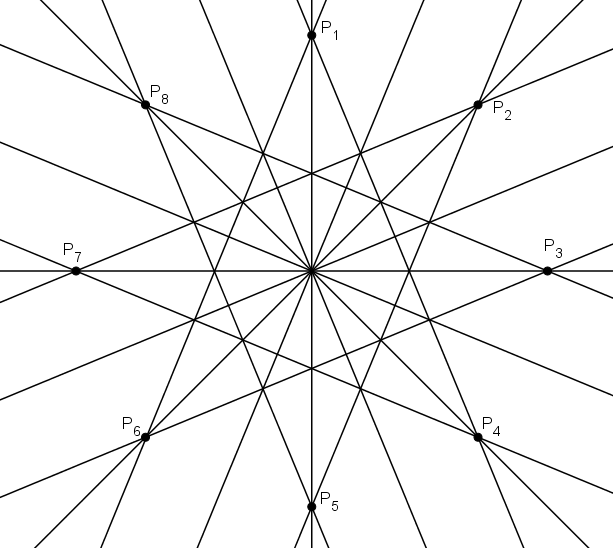}
			\caption{Configuration $\overline{\polyA_{8}}$. The line at infinity is not shown.}
			\label{fig:A(17,1)}
		\end{figure}
	By Lemma \ref{lem:splitting_supersolvable}, the splitting type of $\overline{\polyA_{8}}$ is $(7,9)$. We add the lines $\ell_1 := \overline{P_1P_2}$, $\ell_2 := \overline{P_2P_3}$, $\ldots,\ell_8 := \overline{P_8P_1}$ (blue dotted lines in Figure \ref{fig:A(17,1)+8lines}). By Theorem \ref{thm:addition-deletion}, the splitting type of $\overline{\polyA_8} \cup \{\ell_1,\ldots,\ell_i\} = (7+i,9)$, for all $i = 1,\ldots,8$
	Thus, the existence of unexpected curves for $i \in \{4,5,\ldots,8\}$ is guaranteed by Theorem \ref{thm:unexpected_curve_condition_2}.
	
	This line arrangement can also be extended to new arrangements which admit unexpected curves. We add other $8$ lines: $m_1 := \overline{P_1P_3}$, $m_2 := \overline{P_2P_4}$, $\ldots,m_8 := \overline{P_8P_2}$ (red dash-dotted lines in Figure \ref{fig:A(17,1)+8lines}). By Theorem \ref{thm:addition-deletion}, the splitting type of $\overline{\polyA_8} \cup \{\ell_1,\ldots,\ell_6,m_1,\ldots,m_j\} = (15,9+j)$, for all $j = 1,\ldots,8$.
	Thus, the existence of unexpected curves for $j \in \{1,2,3,4,8\}$ is guaranteed by Theorem \ref{thm:unexpected_curve_condition_2}.
		\begin{figure}[H]
			\centering
			\begin{tikzpicture}[line cap=round,line join=round,>=triangle 45,x=1.0cm,y=1.0cm, scale=3.5]
			\clip(-1.2,-1.2) rectangle (1.2,1.2);
			\draw [domain=-1.8616403900545755:1.7828870091513567] plot(\x,{(-1.--1.41*\x)/-3.41});
			\draw [domain=-1.8616403900545755:1.7828870091513567] plot(\x,{(-1.-1.41*\x)/-3.41});
			\draw [domain=-1.8616403900545755:1.7828870091513567] plot(\x,{(-1.-3.41*\x)/1.41});
			\draw [domain=-1.8616403900545755:1.7828870091513567] plot(\x,{(-1.-1.41*\x)/3.41});
			\draw [domain=-1.8616403900545755:1.7828870091513567] plot(\x,{(-1.--1.41*\x)/3.41});
			\draw [domain=-1.8616403900545755:1.7828870091513567] plot(\x,{(-1.--3.41*\x)/1.41});
			\draw [domain=-1.8616403900545755:1.7828870091513567] plot(\x,{(-1.--3.41*\x)/-1.41});
			\draw [domain=-1.8616403900545755:1.7828870091513567] plot(\x,{(-0.--1.41*\x)/3.41});
			\draw [domain=-1.8616403900545755:1.7828870091513567] plot(\x,{(-0.-1.41*\x)/3.41});
			\draw [,domain=-1.8616403900545755:1.7828870091513567] plot(\x,{(-0.-3.41*\x)/1.41});
			\draw [domain=-1.8616403900545755:1.7828870091513567] plot(\x,{(-0.-3.41*\x)/-1.41});
			\draw [domain=-1.8616403900545755:1.7828870091513567] plot(\x,{(-0.-0.*\x)/1.});
			\draw (0.,-1.9271653979677097) -- (0.,2.018157928619877);
			\draw [domain=-1.8616403900545755:1.7828870091513567] plot(\x,{(-0.-1.*\x)/1.});
			\draw [domain=-1.8616403900545755:1.7828870091513567] plot(\x,{(-0.-1.*\x)/-1.});
			\draw [domain=-1.8616403900545755:1.7828870091513567] plot(\x,{(-0.1462215320910973-0.50101966873706*\x)/-0.20601966873706007});
			\draw [dash pattern=on 2pt off 2pt, domain=-1.8616403900545755:1.7828870091513567, color = blue] plot(\x,{(--0.35460992907801414-0.2092198581560285*\x)/0.5});
			\draw [dash pattern=on 2pt off 2pt, domain=-1.8616403900545755:1.7828870091513567, color = blue] plot(\x,{(--0.3546099290780142-0.5*\x)/0.20921985815602862});
			\draw [dash pattern=on 2pt off 2pt, domain=-1.8616403900545755:1.7828870091513567, color = blue] plot(\x,{(--0.3546099290780142-0.5*\x)/-0.20921985815602862});
			\draw [dash pattern=on 2pt off 2pt, domain=-1.8616403900545755:1.7828870091513567, color = blue] plot(\x,{(--0.3546099290780142-0.20921985815602862*\x)/-0.5});
			\draw [dash pattern=on 2pt off 2pt, domain=-1.8616403900545755:1.7828870091513567, color = blue] plot(\x,{(--0.35460992907801425--0.2092198581560285*\x)/-0.5});
			\draw [dash pattern=on 2pt off 2pt, domain=-1.8616403900545755:1.7828870091513567, color = blue] plot(\x,{(--0.35460992907801425--0.5*\x)/-0.2092198581560285});
			\draw [dash pattern=on 2pt off 2pt, domain=-1.8616403900545755:1.7828870091513567, color = blue] plot(\x,{(--0.3546099290780142--0.5*\x)/0.20921985815602862});
			\draw [dash pattern=on 2pt off 2pt, domain=-1.8616403900545755:1.7828870091513567, color = blue] plot(\x,{(--0.35460992907801414--0.2092198581560285*\x)/0.5});
						\draw [dash pattern=on 1pt off 2pt on 5pt off 4pt, color = red] (0.5,-2.3888521702279593) -- (0.5,1.9621958956186345);
			\draw [dash pattern=on 1pt off 2pt on 5pt off 4pt, domain=-2.2253936045626532:2.06269717415757, color = red] plot(\x,{(-0.5029928072028572--0.7092198581560285*\x)/0.7092198581560285});
			\draw [dash pattern=on 1pt off 2pt on 5pt off 4pt, domain=-2.2253936045626532:2.06269717415757, color = red] plot(\x,{(--0.5-0.*\x)/-1.});
			\draw [dash pattern=on 1pt off 2pt on 5pt off 4pt, domain=-2.2253936045626532:2.06269717415757, color = red] plot(\x,{(--0.5029928072028572--0.7092198581560285*\x)/-0.7092198581560285});
			\draw [dash pattern=on 1pt off 2pt on 5pt off 4pt, color = red] (-0.5,-2.3888521702279593) -- (-0.5,1.9621958956186345);
			\draw [dash pattern=on 1pt off 2pt on 5pt off 4pt, domain=-2.2253936045626532:2.06269717415757, color = red] plot(\x,{(--0.5029928072028571--0.7092198581560284*\x)/0.7092198581560285});
			\draw [dash pattern=on 1pt off 2pt on 5pt off 4pt,  domain=-2.2253936045626532:2.06269717415757, color = red] plot(\x,{(--0.5-0.*\x)/1.});
			\draw [dash pattern=on 1pt off 2pt on 5pt off 4pt, domain=-2.2253936045626532:2.06269717415757, color = red] plot(\x,{(--0.5029928072028571-0.7092198581560284*\x)/0.7092198581560285});
			\begin{scriptsize}
			\draw [fill=black] (0.,0.7092198581560284) circle (1.0pt);
			\draw[color=black] (0.12850940855211876,0.9199030309704954) node {$P_1$};
			\draw [fill=black] (0.5,0.5) circle (1.0pt);
			\draw[color=black] (0.46428160655957507,0.8079789649680107) node {$P_2$};
			\draw [fill=black] (0.7092198581560285,0.) circle (1.0pt);
			\draw[color=black] (0.8630110916934295,0.2203776184549659) node {$P_3$};
			\draw [fill=black] (0.5,-0.5) circle (1.0pt);
			\draw[color=black] (0.7161107550651674,-0.4581620316850977) node {$P_4$};
			\draw [fill=black] (0.,-0.7092198581560285) circle (1.0pt);
			\draw[color=black] (0.11451890030180809,-0.7869389755673966) node {$P_5$};
			\draw [fill=black] (-0.5,-0.5) circle (1.0pt);
			\draw[color=black] (-0.7738783735929201,-0.4371762693096318) node {$P_6$};
			\draw [fill=black] (-0.7092198581560285,0.) circle (1.0pt);
			\draw[color=black] (-0.8858024395954055,0.21338236432981061) node {$P_7$};
			\draw [fill=black] (-0.5,0.5) circle (1.0pt);
			\draw[color=black] (-0.5290444792124832,0.7590121860919237) node {$P_8$};
			\end{scriptsize}
			\end{tikzpicture}
			\caption{Blue dotted lines $\ell_1,\ldots,\ell_8$ added to $\overline{\polyA_8}$.}
			\label{fig:A(17,1)+8lines}
		\end{figure}
\end{example}
	
		\begin{remark}
			Observe that the order of adding new lines in Example \ref{ex:octagon} is not {relevant because the lines we are adding meet each other in points which are also intersections with the original lines of the polygonal arrangement}.
		\end{remark}
	
	\subsection{Sporadic cases}
	It occurs that not only the dual configurations of points to the line arrangements presented in the previous sections are giving unexpected curves. There are {other} simplicial arrangements for which we have the same property. In Table \ref{tab:A(n,k) sporadic dual}, we present a list of arrangements, together with their splitting type, and with their original names coming as in \cite{Gru09}. In this list, we consider line arrangements that are {\it dual} to the configurations $A(n,k)$ described in Gr\"unbaum's paper.
	
	\begin{definition}
		Given a line arrangement $\caA$, we define its {\bf dual line arrangement}, denoted by $\caA^d$, as the line arrangement dual to the configuration of points $\Sing(\caA)$.
	\end{definition}
	
	\begin{example}
		The next three figures illustrate examples of the previous definitions and notations.
		\begin{center}
		\begin{minipage}{0.4\textwidth}
			\begin{figure}[H]
			\begin{center}
				\begin{tikzpicture}[line cap=round,line join=round,>=triangle 45,x=1.0cm,y=1.0cm,scale=0.5]
				\clip(0.88,-4.7) rectangle (10.9,4.94);
				\draw  (6,0) circle (4.47cm);
				\draw  (6.02,4.47)-- (6.04,-4.47);
				\draw  (1.53,-0.04)-- (10.47,-0.02);
				\draw [shift={(8.96,0.01)}]  plot[domain=2.15:4.13,variable=\t]({1*5.35*cos(\t r)+0*5.35*sin(\t r)},{0*5.35*cos(\t r)+1*5.35*sin(\t r)});
				\draw [shift={(2.96,-0.01)}]  plot[domain=-0.97:0.97,variable=\t]({1*5.42*cos(\t r)+0*5.42*sin(\t r)},{0*5.42*cos(\t r)+1*5.42*sin(\t r)});
				\draw [shift={(6.01,-2.95)}]  plot[domain=0.58:2.57,variable=\t]({1*5.34*cos(\t r)+0*5.34*sin(\t r)},{0*5.34*cos(\t r)+1*5.34*sin(\t r)});
				\draw [shift={(5.99,3.06)}]  plot[domain=3.75:5.68,variable=\t]({1*5.44*cos(\t r)+0*5.44*sin(\t r)},{0*5.44*cos(\t r)+1*5.44*sin(\t r)});
				\draw (9.19,3.13)-- (2.85,-3.17);
				\draw (2.82,3.14)-- (9.17,-3.15);
				\end{tikzpicture}
				\caption{Line arrangements $\overline{\polyA_{4}}$. \vspace{1.2cm}}
				\label{fig:P4}
				\end{center}
			\end{figure}
		\end{minipage}~~~
		\begin{minipage}{0.4\textwidth}
			\begin{figure}[H]
			\begin{center}
				\begin{tikzpicture}[line cap=round,line join=round,>=triangle 45,x=1.0cm,y=1.0cm,scale=0.5]
				\clip(0.88,-4.7) rectangle (10.9,4.94);
				\draw (6,0) circle (4.47cm);
				\draw (6.02,4.47)-- (6.04,-4.47);
				\draw (1.53,-0.04)-- (10.47,-0.02);
				\draw [shift={(8.96,0.01)}]  plot[domain=2.15:4.13,variable=\t]({1*5.35*cos(\t r)+0*5.35*sin(\t r)},{0*5.35*cos(\t r)+1*5.35*sin(\t r)});
				\draw [shift={(2.96,-0.01)}]  plot[domain=-0.97:0.97,variable=\t]({1*5.42*cos(\t r)+0*5.42*sin(\t r)},{0*5.42*cos(\t r)+1*5.42*sin(\t r)});
				\draw [shift={(6.01,-2.95)}]  plot[domain=0.58:2.57,variable=\t]({1*5.34*cos(\t r)+0*5.34*sin(\t r)},{0*5.34*cos(\t r)+1*5.34*sin(\t r)});
				\draw [shift={(5.99,3.06)}]  plot[domain=3.75:5.68,variable=\t]({1*5.44*cos(\t r)+0*5.44*sin(\t r)},{0*5.44*cos(\t r)+1*5.44*sin(\t r)});
				\draw  (9.19,3.13)-- (2.85,-3.17);
				\draw (2.82,3.14)-- (9.17,-3.15);
				\draw [shift={(8.99,-3.01)}] plot[domain=1.54:3.17,variable=\t]({1*6.15*cos(\t r)+0*6.15*sin(\t r)},{0*6.15*cos(\t r)+1*6.15*sin(\t r)});
				\draw [shift={(2.99,3.03)}] plot[domain=-1.59:0.02,variable=\t]({1*6.2*cos(\t r)+0*6.2*sin(\t r)},{0*6.2*cos(\t r)+1*6.2*sin(\t r)});
				\draw [shift={(9.05,3.08)}] plot[domain=3.13:4.73,variable=\t]({1*6.24*cos(\t r)+0*6.24*sin(\t r)},{0*6.24*cos(\t r)+1*6.24*sin(\t r)});
				\draw [shift={(2.97,-3.06)}] plot[domain=-0.02:1.6,variable=\t]({1*6.2*cos(\t r)+0*6.2*sin(\t r)},{0*6.2*cos(\t r)+1*6.2*sin(\t r)});
				\end{tikzpicture}
				\caption{Line arrangements $\overline{\polyA_{4}^d}$. \vspace{1.2cm}}
				\label{fig:P4 dual}
				\end{center}
			\end{figure}
		\end{minipage}\\
		\begin{minipage}{0.75\textwidth}
			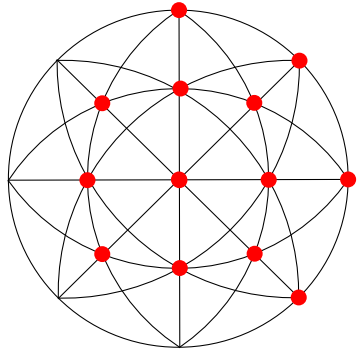
\begin{figure}[H]
				\begin{center}
				\begin{tikzpicture}[line cap=round,line join=round,>=triangle 45,x=1.0cm,y=1.0cm, scale=0.5]
				\clip(0.88,-4.7) rectangle (10.9,4.94);
				\draw (6,0) circle (4.47cm);
				\draw (6.02,4.47)-- (6.04,-4.47);
				\draw (1.53,-0.04)-- (10.47,-0.02);
				\draw [shift={(8.96,0.01)}]  plot[domain=2.15:4.13,variable=\t]({1*5.35*cos(\t r)+0*5.35*sin(\t r)},{0*5.35*cos(\t r)+1*5.35*sin(\t r)});
				\draw [shift={(2.96,-0.01)}]  plot[domain=-0.97:0.97,variable=\t]({1*5.42*cos(\t r)+0*5.42*sin(\t r)},{0*5.42*cos(\t r)+1*5.42*sin(\t r)});
				\draw [shift={(6.01,-2.95)}]  plot[domain=0.58:2.57,variable=\t]({1*5.34*cos(\t r)+0*5.34*sin(\t r)},{0*5.34*cos(\t r)+1*5.34*sin(\t r)});
				\draw [shift={(5.99,3.06)}]  plot[domain=3.75:5.68,variable=\t]({1*5.44*cos(\t r)+0*5.44*sin(\t r)},{0*5.44*cos(\t r)+1*5.44*sin(\t r)});
				\draw  (9.19,3.13)-- (2.85,-3.17);
				\draw  (2.82,3.14)-- (9.17,-3.15);
				\draw [shift={(8.99,-3.01)}] plot[domain=1.54:3.17,variable=\t]({1*6.15*cos(\t r)+0*6.15*sin(\t r)},{0*6.15*cos(\t r)+1*6.15*sin(\t r)});
				\draw [shift={(2.99,3.03)}] plot[domain=-1.59:0.02,variable=\t]({1*6.2*cos(\t r)+0*6.2*sin(\t r)},{0*6.2*cos(\t r)+1*6.2*sin(\t r)});
				\draw [shift={(9.05,3.08)}] plot[domain=3.13:4.73,variable=\t]({1*6.24*cos(\t r)+0*6.24*sin(\t r)},{0*6.24*cos(\t r)+1*6.24*sin(\t r)});
				\draw [shift={(2.97,-3.06)}] plot[domain=-0.02:1.6,variable=\t]({1*6.2*cos(\t r)+0*6.2*sin(\t r)},{0*6.2*cos(\t r)+1*6.2*sin(\t r)});
				\begin{scriptsize}
				\fill [color=red] (3.61,-0.04) circle (6.0pt);
				\fill [color=red] (6.04,-2.37) circle (6.0pt);
				\fill [color=red] (8.38,-0.03) circle (6.0pt);
				\fill [color=red] (4,2) circle (6.0pt);
				\fill [color=red] (6.03,-0.04) circle (6.0pt);
				\fill [color=red] (6.06,2.39) circle (6.0pt);
				\fill [color=red] (8,2.01) circle (6.0pt);
				\fill [color=red] (6.02,-0.02) circle (6.0pt);
				\fill [color=red] (8.01,-1.99) circle (6.0pt);
				\fill [color=red] (4,-2) circle (6.0pt);
				\fill [color=red] (6.02,4.47) circle (6.0pt);
				\fill [color=red] (9.19,3.13) circle (6.0pt);
				\fill [color=red] (10.47,-0.02) circle (6.0pt);
				\fill [color=red] (9.17,-3.15) circle (6.0pt);
				\end{scriptsize}
				\end{tikzpicture}
				\caption{Line arrangement $\overline{\polyA_{4}^d}$ and the points in $\Sing_{ 3}(\overline{\polyA_{4}^d})$ (red bolded), i.e., the singular points with multiplicity at least $3$. In particular, $\Sing_{\geq 4}(\overline{\polyA_{4}^d})$ consists only in the central point.}
				\label{fig:P4 dual with t}
			\end{center}
			\end{figure}
		\end{minipage}
		\end{center}
	\end{example}
	
		\begin{table}[H]
		\centering
		\begin{tabular}{|c|c|c|c|c|c|c|c|c|c|c|}
			\cline{1-3}\cline{5-7}\cline{9-11}
			$\caA_Z$ & $a_Z$ & $b_Z$ & & $\caA_Z$ & $a_Z$ &  $b_Z$ & & $\caA_Z$ & $a_Z$ &  $b_Z$ \\
			
			\cline{1-3}\cline{5-7}\cline{9-11}
		    $A(13,2)$  & 5	&  7   &  &  $ A(20,5)$&  8	 &  11 & &  $ A(29,3) $   & 13	&  15  \\
		    
			\cline{1-3}\cline{5-7}\cline{9-11}
			$A(13,3)$ & 5	 &  7   &    &   $A(21,3)$&  9	&  11   &  & $ A(29,4) $ &13	&  15    \\
			
			\cline{1-3}\cline{5-7}\cline{9-11}
			 $A(17,2)$ &  7	&  9   &    &  $A(21,4)$&  9	&  11   & & $A(29,5) $   & 13	&  15    \\
			 
			\cline{1-3}\cline{5-7}\cline{9-11}
			$A(17,4)$&  7	&  9   &   &   $ A(21,5) $&   9	&  11   & & 	  $ A(30,3) $ & 13	&  16  \\
			
			\cline{1-3}\cline{5-7}\cline{9-11}
			$A(17,3)$ & 7	&  9   &   &  $A(25,2)$ &	11	&  13  & &   $ A(31,2)$  &  13	&  17   \\
			
			\cline{1-3}\cline{5-7}\cline{9-11}
			 $A(19,1)$&  7	&  11   &  &   $ A(25,4) $	   &11	&  13    & &   $ A(31,3) $ &  13	&  17       \\
			 
			\cline{1-3}\cline{5-7}\cline{9-11}
			  $ A(19,3)$&  7	&  11   &  &  $A(25,7)$ & 11	&  13  & &   $A(37,3)$ & 17	&  19   \\
			  			\cline{1-3}\cline{5-7}\cline{9-11}

		\end{tabular}
		\caption{Line arrangements duals to simplicial arrangements defined in \cite{Gru09} and their splitting types. The computations have been made with the algebra software {\it Singular}. The code used can be found as additional file to the arXiv version of the paper and includes the full computation for the $A(31,3)$ case. The coordinates of the points of all the configurations listed in the table have been kindly provided by M. Cuntz during a private communication.}
		\label{tab:A(n,k) sporadic dual}
	\end{table}
	
	In Table \ref{tab:A(n,k) sporadic extra}, we give a list of line arrangements such that, for some $k$, the configuration of points $\Sing_{\geq k}$ admits unexpected curves. We also give the exponents which speak about the degrees of unexpected curves.
	
		\begin{table}[H]
		\centering
		\begin{tabular}{|c|c|c|c|c|}
			\hline
			$\caA$ & $k$ & $|\Sing_{\geq k}(\caA)|$ & $a_{\Sing_{\geq k}(\caA)}$ & $b_{\Sing_{\geq k}(\caA)}$\\
			\hline
			 $A^d(13,2)$   & $4$ & $9$ & 3	&  5  \\
			\hline
			 $A^d(13,2)$   & $3$ & $13$ & 5 &  7  \\
			 \hline
			 $A^d(13,2)$   & $1$ & $25$ & 11 &  13  \\
			 \hline
			 $A^d(17,2)$   & $4$ & $9$ & 3	&  5  \\
			 \hline
			 $A^d(17,4)$   & $4$ & $9$ & 3	&  5  \\
			 \hline
			 $A^d(17,4)$   & $3$ & $25$ & 11	&  13  \\
			 \hline
			 $A^d(19,3)$   & $4$ & $13$ & 5	&  7  \\
			 \hline
			 $A^d(19,3)$   & $3$ & $25$ & 11	&  13  \\
			 \hline
			 $A^d(21,3)$   & $4$ & $13$ & 5	 &  7  \\
			 \hline
			 $A^d(21,3)$   & $3$ & $37$ & 17	&  19  \\
			 \hline
			 $A^d(25,2)$   & $4$ & $21$ & 9	&  11  \\
			 \hline
			 $A^d(25,4)$   & $4$ & $19$ & 7	&  11  \\
			 \hline
		\end{tabular}~~
		\begin{tabular}{|c|c|c|c|c|}
			\hline
		$\caA$ & $k$ & $|\Sing_{\geq k}(\caA)|$ & $a_{\Sing_{\geq k}(\caA)}$ & $b_{\Sing_{\geq k}(\caA)}$\\
			 \hline
			 $A^d(25,7)$   & $4$ & $18$ & 7	&  10  \\
			 \hline
			 $A^d(26,3)$   &  $4$ & $19$  & 7	&  11  \\
			 \hline
			 $A^d(26,4)$   &  $4$ & $18$  & 7	&  10  \\
			 \hline
			 $A^d(27,2)$   &  $4$ & $20$ & 8	&  11  \\
			 \hline
			 $A^d(27,3)$   &  $4$ & $21$  & 8	&  11 \\
			 \hline
			 $A^d(27,4)$   &  $4$ & $19$  & 7	&  11 \\
			 \hline
			 $A^d(28,4)$   &  $4$ & $21$  & 9	&  11 \\
			 \hline
			 $A^d(31,3)$   &  $4$ & $31$  & 13	&  17 \\
			 \hline
			 $A^d(34,2)$   &  $4$ & $13$  & 5	&  7 \\
			 \hline
			 $A^d(37,3)$   &  $4$ & $37$  & 17	&  19 \\
			 \hline
			 $A^d(37,3)$   &  $6$ & $13$  & 5	&  7 \\
			 \hline
		\end{tabular}
		\caption{Configurations of points defined as high order points of some simplicial line arrangements.}
		\label{tab:A(n,k) sporadic extra}
	\end{table}

\subsection{Future directions.}

	The question we considered so far (Problem \ref{question:A}) is a special case of the following more general problem suggested by Cook II, Harbourne, Migliore and Nagel in \cite{CHMN17}.
	
	\begin{problem}\label{question:B}
		Let $Z$ be a set of reduced points in $\PP^2$ and let $\XX = m_1Q_1 + \ldots + m_sQ_s$ be a scheme of fat points with general support. 
		$$
			\text{For which $(Z;m_1,\ldots,m_s;j)$ we have that} \dim_{\CC}[I(Z+\XX)]_j > \max\{\dim_{\CC}[I(Z)]_j - \deg(\XX), 0\}?
		$$
		If so, we say that $Z$ {\bf admits unexpected curves of degree $j$ with respect to $\XX$.}
	\end{problem}
	
	The examples from Table \ref{tab:A(n,k) sporadic dual} and Table \ref{tab:A(n,k) sporadic extra} give examples of unexpected curves according to Problem \ref{question:A} (i.e., when $\XX$ is just a fat point), but we can also extend them to get examples of unexpected curves according to Problem \ref{question:B}. The idea is explained in the following fact.
	
	\begin{proposition}\label{prop: further}
		Let $Z$ be a configuration of points with splitting type $(a_Z,b_Z)$ with $b_Z - a_Z \geq  2$. Then, for any $j \in \{0,\ldots,b_Z-a_Z-2\}$, we have that $Z$ admits a unique unexpected curve of degree $a_Z+1+j$ with respect to $\XX = (a_Z+j)P + A$, where $A$ is a set of reduced points of cardinality $|A| = j$. 
	\end{proposition}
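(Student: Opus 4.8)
The plan is to reduce the whole statement to the description of the unexpected curves of $Z$ with respect to a single fat point furnished by Proposition \ref{prop:unexpected_curve_unique}, and then to cut the resulting linear system down by the extra points of $A$. Throughout I use that $Z$ admits unexpected curves (so that Proposition \ref{prop:unexpected_curve_unique} applies); the numerical hypothesis $b_Z-a_Z\geq 2$ is precisely condition (i) of Theorem \ref{thm:unexpected_curve_condition_2} (recall $a_Z+b_Z=|Z|-1$) and guarantees that the index set $\{0,\ldots,b_Z-a_Z-2\}$ is nonempty. Fix such a $j$ and set $d:=a_Z+1+j$, so that $a_Z<d\leq b_Z-1$. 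By Proposition \ref{prop:unexpected_curve_unique} there is a unique unexpected curve $C$ of degree $a_Z+1$, singular of multiplicity $a_Z$ at the general point $P$, with form $f_C$, and the unexpected curves of degree $d$ with respect to the fat point $(a_Z+j)P$ are exactly $C+L_1+\ldots+L_j$ with the $L_i$ arbitrary lines through $P$. Choosing coordinates with $P=(0:0:1)$, the products $\ell_{L_1}\cdots\ell_{L_j}$ range over all of $\CC[x_0,x_1]_j$, so that
$$[I(Z+(a_Z+j)P)]_d = f_C\cdot \CC[x_0,x_1]_j, \qquad \dim_\CC[I(Z+(a_Z+j)P)]_d = j+1.$$

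Next I would impose passage through the $j$ general reduced points $A=\{A_1,\ldots,A_j\}$. Since $I(Z+\XX)=I(Z)\cap I(P)^{a_Z+j}\cap I(A_1)\cap\ldots\cap I(A_j)$, an element of $[I(Z+\XX)]_d$ is a form $f_C\cdot g$ with $g\in\CC[x_0,x_1]_j$ vanishing at each $A_i$. Because $P$ and the $A_i$ are general, $f_C(A_i)\neq 0$, so the condition is simply $g(A_i)=0$, which for $A_i=(a_{i,0}:a_{i,1}:a_{i,2})$ reads $g(a_{i,0},a_{i,1})=0$. The $j$ points $(a_{i,0}:a_{i,1})\in\PP^1$ are the images of the $A_i$ under projection from $P$, hence distinct for general $A_i$, and the corresponding evaluation functionals on the $(j+1)$-dimensional space $\CC[x_0,x_1]_j$ are linearly independent (a nonzero binary form of degree $j$ has at most $j$ roots). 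Therefore the common zero locus is $1$-dimensional, and
$$\dim_\CC[I(Z+\XX)]_d = 1,$$
the unique curve being $f_C\cdot\prod_{i=1}^{j}\ell_{\overline{PA_i}}$, i.e. $C$ together with the $j$ lines joining $P$ to the points of $A$.

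It remains to certify that this curve is genuinely unexpected, i.e. that the expected dimension $\max\{\dim_\CC[I(Z)]_d-\deg(\XX),0\}$ vanishes, where $\deg(\XX)=\binom{a_Z+j+1}{2}+j$. Here I would bootstrap from the fat-point case: for $j$ in the stated range Theorem \ref{thm:unexpected_curve_condition_2} gives that $Z$ has an unexpected curve of degree $d$ with respect to $(a_Z+j)P$, that is
$$j+1 = \dim_\CC[I(Z+(a_Z+j)P)]_d > \max\left\{\dim_\CC[I(Z)]_d-\binom{a_Z+j+1}{2},\,0\right\}.$$
Since the right-hand side is an integer, this forces $\dim_\CC[I(Z)]_d\leq\binom{a_Z+j+1}{2}+j=\deg(\XX)$, whence $\max\{\dim_\CC[I(Z)]_d-\deg(\XX),0\}=0$. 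As $\dim_\CC[I(Z+\XX)]_d=1>0$, the configuration admits a unique unexpected curve of degree $d=a_Z+1+j$ with respect to $\XX$, as claimed.

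The genuinely delicate point is the independence of the $j$ linear conditions coming from $A$: everything hinges on the fact that, for general $A_i$, projection from the general singular point $P$ of $C$ sends them to $j$ distinct points of the pencil of lines through $P$, so that the resulting interpolation problem on binary forms of degree $j$ is non-degenerate and drops the dimension from $j+1$ to exactly $1$. The two remaining inputs — the precise shape $f_C\cdot\CC[x_0,x_1]_j$ of the fat-point linear system and the numerical bound on $\dim_\CC[I(Z)]_d$ — are read off directly from Proposition \ref{prop:unexpected_curve_unique} and Theorem \ref{thm:unexpected_curve_condition_2}, respectively.
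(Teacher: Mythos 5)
Your proof is correct and follows essentially the same route as the paper: reduce to the single general fat point via Proposition \ref{prop:unexpected_curve_unique}, observe that the $j$ general simple points of $A$ cut the $(j+1)$-dimensional system down to dimension $1$, and check that the expected dimension $\max\{\dim_{\CC}[I(Z)]_d-\deg(\XX),0\}$ vanishes using the integrality of the fat-point inequality. The only difference is that where the paper simply invokes the principle that generic simple points impose independent conditions, you verify it explicitly from the description $[I(Z+(a_Z+j)P)]_d=f_C\cdot\CC[x_0,x_1]_j$ by projecting the points of $A$ from $P$ to $\PP^1$ -- a harmless (and slightly more informative) elaboration that also exhibits the unique curve as $C$ plus the lines $\overline{PA_i}$.
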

	\begin{proof}
		Let $P$ be a general point. By Proposition \ref{prop:unexpected_curve_unique}, we know that, for any $j \in \{0,\ldots,b_Z-a_Z-2\}$, we have that $Z$ admits unexpected curves of degree $a_Z+j+1$ with respect to $(a_Z+j)P$ and, in particular, that
		$$
			\dim_\CC[I(Z+(a_Z+j)P)]_{a_Z+j+1} = j+1.
		$$
		Moreover, we may observe that, by definition of unexpected curves, we have
		\begin{equation}\label{eq: expected}
			\max\left\{\dim_\CC[I(Z)]_{a_Z+j+1} - {a_Z+j+1 \choose 2}, 0\right\} \leq j.
		\end{equation}
		Now, since generic simple points always impose the expected number of conditions on a linear system of curves, we have that, for any $j \in \{0,\ldots,b_Z-a_z-2\}$,
		$$
			\dim_\CC[I(Z+(a_Z+j)P+A)]_{a_Z+j+1} = \dim_\CC[I(Z+(a_Z+j)P]_{a_Z+j+1}-j = 1;
		$$
		and, at the same time, by \eqref{eq: expected},
		$$
			\max\left\{\dim_\CC[I(Z)]_{a_Z+j+1} - {a_Z+j+1 \choose 2} - j, 0\right\} = 0.
		$$
		Therefore, we have that $Z$ admits a unique unexpected curve of degree $a_Z+j+1$ with respect to $\XX = (a_Z+j)P + A$, where $A$ is a set of generic simple points with $|A| = j$. 
		\end{proof}
	
	Some of the examples provided in Table \ref{tab:A(n,k) sporadic dual} and Table \ref{tab:A(n,k) sporadic extra} satisfy the hypothesis of the latter proposition and give examples of unexpected curves with respect to Problem \ref{question:B}. 
	
	Note that, by Proposition \ref{prop:unexpected_curve_unique}, the unexpected curves constructed in the latter proposition, whenever $j\geq 1$, are reducible. It would be interesting to construct an example of reducible unexpected curve with respect to a scheme of fat points $\XX$ having support in more than one point. 

	Finding a characterization to answer Problem \ref{question:B} for some particular non-connected scheme $\XX$, e.g., the
	union of two fat points, or constructing additional interesting examples of reducible unexpected curve besides the ones constructed in Proposition \ref{prop: further}, are problems worthy of further investigation.
	\bibliographystyle{alpha}

\begin{thebibliography}{pragmatic}
		\bibitem[AT16]{AT16}
		B. Anzis, and \c{S}. Toh\u{a}neanu. {\it On the geometry of real or complex supersolvable line arrangements}, Journal of Combinatorial Theory, Series A, 140 : 76--96 (2016).
		
		\bibitem[CM01]{CM01}
		C. Ciliberto and R. Miranda. {\it The Segre and
		Harbourne-Hirschowitz conjectures}, Applications of algebraic
		geometry to coding theory, physics and computation,
		pp. 37--51 (2001).
		
		\bibitem[CHMN17]{CHMN17}
		D. Cook II, B. Harbourne, J. Migliore, and U. Nagel. {\it Line arrangements and configurations of points with an unusual geometric property}. Preprint, {\tt arXiv:1602.02300}.
		
		\bibitem[Cun11]{Cuntz}
		M. Cuntz {\it Simplicial arrangements with up to $27$ lines}. Preprint, {\tt arXiv:1108.3000v1}.
		
		\bibitem[DGPS]{DGPS}
		W. Decker, G.-M. Greuel, G. Pfister, and H. Sch{\"o}nemann.
		\newblock {\sc Singular} {4-1-0} --- {A} computer algebra system for polynomial computations.
		\newblock {\tt http://www.singular.uni-kl.de} (2017).
		
		\bibitem[DIV14]{DIV14}
		R. Di Gennaro, G. Ilardi, and J. Vall\`es, {\it Singular hypersurfaces characterizing the Lefschetz properties}, J. London Math. Soc. 89(1) : 194 -- 212 (2014).
		
		\bibitem[DS17]{DS17}
		A. Dimca, and G. Sticlaru, {\it On supersolvable and nearly supersolvable line arrangements}, {\tt arXiv:1712.03885v1}.
		
		\bibitem[FGST18]{FGST18}
		\L. Farnik, F. Galuppi, L. Sodomaco, and B. Trok, {\it On the unique unexpected quartic in $\PP^2$}, in preparation.
		
		\bibitem[FV14]{FV14} D. Faenzi, and J. Vall\`es, {\it Logarithmic bundles and line arrangements, an approach via the standard construction}, J. London Math. Soc. (2) 90 : 675–694 (2014).
		
		\bibitem[GO81]{GO81}
		A.V. Geramita, and F. Orecchia. {\it On the Cohen-Macaulay type of s-lines in An+ 1}, Journal of Algebra 70(1) : 116--140 (1981).
		
		\bibitem[Gim87]{Gim87}
		A. Gimigliano. {\it On Linear Systems of Plane Curves}, Ph. D. thesis, Queen’s
University, Kingston, Ontario (1987).
		
		\bibitem[GT13]{GreenTao}
		B. Green, and T. Tao. {\it On sets defining few ordinary lines}, 
		Discrete Comput. Geom. 50 : 409--468 (2013).
		
		\bibitem[GS]{Macaulay2}
		D. Grayson, and M. Stillman. {\it Macaulay2, a software system for research in algebraic geometry}, Available at {\tt http://www.math.uiuc.edu/Macaulay2/}
		
		\bibitem[Gr\"u09]{Gru09}
		B. Gr\"unbaum. {\it A catalogue of simplicial arrangements in the real projective plane}, Ars Mathematica Contemporanea 2 : 1--25 (2009).
		
		\bibitem[Har86]{Har86}
		B. Harbourne. {\it The geometry of rational surfaces and Hilbert functions of points
in the plane}, Can. Math. Soc. Conf. Proc. 6 : 95–111 (1986).
		
		\bibitem[Har17]{Har17}
		B. Harbourne. {\it Asymptotics of linear systems, with connections to line arrangements}.  To appear, 2016 miniPAGES Conf. Proc., Warsaw, Poland, (BCSim-2016-s02). {\tt arXiv:1705.09946}.
		
		\bibitem[Hir89]{Hir89}
		A. Hirschowitz. {\it Une conjecture pour la cohomologie des diviseurs sur les surfaces
rationelles g\'en\'eriques}, Journ. Reine Angew. Math. 397 : 208–213 (1989).
		
		
		\bibitem[Nag60]{Nag60}
		M. Nagata. {\it On rational surfaces, II}. Mem. Coll. Sci. Univ. Kyoto, Ser. A Math.
33 : 271–293 (1960).
		
		\bibitem[OT92]{OT}
		P. Orlik, and H. Terao. {\it Arrangement of hyperplanes}, Grundlehren der Mathematischen Wissenschaften 300, Springer-Verlag, Berlin, (1992).
		
		\bibitem[Sch12]{Sch12}
		H. Schenck. \textit{Hyperplane arrangements: computations and conjectures}, Adv. Stud. Pure Math., 62, Math. Soc. Japan, Tokyo (2012). 
		
		\bibitem[Seg61]{Seg61}
		B. Segre. {\it Alcune questioni su insiemi finiti di punti in Geometria Algebrica}, Atti
del Convegno Internaz. di Geom. Alg., Torino (1961).
	\end{thebibliography}
	
\end{document}